\begin{document}

\def\JCMvol{xx}
\def\JCMno{x}
\def\JCMyear{200x}
\def\JCMreceived{Month xx, 200x}
\def\JCMrevised{}
\def\JCMaccepted{}
\def\JCMonline{}
\setcounter{page}{1}



\def\cgeq{\raisebox{-1mm}{$\;\stackrel{>}{\sim}\;$}}
\def\cequiv{\raisebox{-1mm}{$\;\stackrel{=}{\sim}\;$}}
\def\cleq{\raisebox{-1mm}{$\;\stackrel{<}{\sim}\;$}}
\def\T{\mathop{\cal T}}
\def\esssup{\mathop{\rm esssup}}
\def\qed{\hfill$\fbox{}$}
\catcode`@=11
\newskip\plaincentering \plaincentering=0pt plus 1000pt minus 1000pt
\def\@plainlign{\tabskip=0pt\everycr={}} 
\def\eqalignno#1{\displ@y \tabskip=\plaincentering
  \halign to\displaywidth{\hfil$\@plainlign\displaystyle{##}$\tabskip=0pt
    &$\@plainlign\displaystyle{{}}##$\hfil\tabskip=\plaincentering
    &\llap{$\hbox{\rm\@plainlign##}$}\tabskip=0pt\crcr
    #1\crcr}}



\markboth{C. LI AND Y. LI}
 {Minimal Ger\v{s}gorin tensor eigenvalue inclusion set and its numerical approximation}

\title{Minimal Ger\v{s}gorin tensor eigenvalue inclusion set and its numerical approximation}

\author{Chaoqian Li~and ~Yaotang Li
               \thanks{School of Mathematics and Statistics, Yunnan
University, Kunming, Yunnan,  P. R. China 650091\\
Email: lichaoqian@ynu.edu.cn,~liyaotang@ynu.edu.cn}}

\maketitle

\begin{abstract}
For a complex tensor $\mathcal {A}$, Minimal Ger\v{s}gorin tensor
eigenvalue inclusion set of $\mathcal {A}$ is presented, and its
sufficient and necessary condition is given. Furthermore, we study
its boundary by the spectrums of the equimodular set and the
extended equimodular set for $\mathcal {A}$. Lastly, for an
irreducible tensor, a numerical approximation to Minimal
Ger\v{s}gorin tensor eigenvalue inclusion set is given.
\end{abstract}

\begin{classification}
15A69, 15A18.
\end{classification}

\begin{keywords}
Tensor eigenvalue, Minimal Ger\v{s}gorin tensor eigenvalue theorem,
Boundary, Approximation.
\end{keywords}

\section{Introduction}

Let $N=\{1,2,\ldots, n\}.$ For a complex (real) order $m$ dimension
$n$ tensor $\mathcal {A}=(a_{i_1\cdots i_m})$ (written $\mathcal
{A}\in \mathbb{C}^{[m,n]}$ ($\mathbb{R}^{[m,n]}$), respectively),
where
\[a_{i_1\cdots i_m}\in \mathbb{C}~ (\mathbb{R}), ~i_j=1,\ldots,n,~j=1,\ldots, m.\]
we call a complex number $\lambda$ an eigenvalue of $\mathcal {A}$
and a nonzero complex vector $x$ an eigenvector of $\mathcal {A}$
associated with $\lambda$, if
\begin{equation}\label{eigvalue}\mathcal {A}x^{m-1}=\lambda
x^{[m-1]},\end{equation} where $\mathcal {A}x^{m-1}$ and $
x^{[m-1]}$ are vectors, whose $i$th components are
\[(\mathcal {A}x^{m-1})_i=\sum\limits_{i_2,\ldots,i_m\in N}
a_{ii_2\cdots i_m}x_{i_2}\cdots x_{i_m}\] and
\[(x^{[m-1]})_i=x_i^{m-1},\] respectively. Note that there are other
definitions of eigenvalue and eigenvectors, such as, H-eigenvalue,
D-eigenvalue and Z-eigenvalue; see \cite{Qi3,Qi5,Qi2}.  Obviously,
the definition of eigenvalue for matrices follows from the case
$m=2$.

Tensor eigenvalues and eigenvectors have received much attention
recently in the literatures \cite{Ch,La1,Ko1,Li,Ng,Qi3,Qi4,Qi1,Wa}.
Many important results on the eigenvalue problem of matrices have
been successfully extended  to higher order tensors; see
\cite{Ch,Ca1,Li1,Ng,Qi3,Qi4,Qi5,Ya,Ya1}. In \cite{Qi3}, Qi
generalized Ger\v{s}gorin eigenvalue inclusion theorem from matrices
to real supersymmetric tensors, which can be easily extended to
generic tensors; see \cite{Ya}.

\begin{theorem} \emph{\cite{Qi3}}
\label{th 1.1} Let $\mathcal {A}=(a_{i_1\cdots i_m})\in
\mathbb{C}^{[m,n]}$ and $\sigma(\mathcal {A})$ be the spectrum of
$\mathcal {A}$, that is, \[\sigma(\mathcal
{A})=\{\lambda\in\mathbb{C}: \mathcal {A}x^{m-1}=\lambda x^{[m-1]},~
x\in \mathbb{C}^n\backslash\{0\}\}.\] Then
\begin{equation}\label{eq 2.1}
\sigma(\mathcal {A})\subseteq \Gamma(\mathcal
{A})=\bigcup\limits_{i\in N} \Gamma_i(\mathcal {A}),
\end{equation}
where \[\Gamma_i(\mathcal {A})=\left\{z\in \mathbb{C}:|z-a_{i\cdots
i}|\leq r_i(\mathcal {A})=\sum\limits_{i_2,\ldots,i_m\in N,\atop
\delta_{ii_2\ldots i_m}=0} |a_{ii_2\cdots i_m}|\right\}\] and
\[\delta_{i_1i_2\cdots i_m}= \left\{ \begin{array}{cc}
\delta_{i_1i_2\cdots i_m}=1,       & if ~i_1=\cdots =i_m,~\\
 0,       & otherwise.
\end{array} \right. \]
\end{theorem}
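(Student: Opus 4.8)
The plan is to transcribe the classical proof of Ger\v{s}gorin's theorem for matrices, the one extra ingredient being to normalize by the component of the eigenvector having largest modulus. Let $\lambda\in\sigma(\mathcal{A})$ and let $x\in\mathbb{C}^n\backslash\{0\}$ be an eigenvector with $\mathcal{A}x^{m-1}=\lambda x^{[m-1]}$. Since $x\neq 0$, I would first fix an index $p\in N$ for which $|x_p|=\max_{k\in N}|x_k|>0$.

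Next I would write out the $p$-th component of the eigenvalue equation and split off the diagonal entry, using the symbol $\delta_{i_1\cdots i_m}$ exactly as defined in the statement:
\[\lambda x_p^{m-1}=(\mathcal{A}x^{m-1})_p=a_{p\cdots p}x_p^{m-1}+\sum\limits_{i_2,\ldots,i_m\in N,\atop \delta_{pi_2\cdots i_m}=0}a_{pi_2\cdots i_m}x_{i_2}\cdots x_{i_m},\]
so that $(\lambda-a_{p\cdots p})x_p^{m-1}=\sum_{\delta_{pi_2\cdots i_m}=0}a_{pi_2\cdots i_m}x_{i_2}\cdots x_{i_m}$. Taking absolute values, applying the triangle inequality, and bounding each factor by $|x_{i_j}|\leq|x_p|$ gives
\[|\lambda-a_{p\cdots p}|\,|x_p|^{m-1}\leq\sum\limits_{i_2,\ldots,i_m\in N,\atop \delta_{pi_2\cdots i_m}=0}|a_{pi_2\cdots i_m}|\,|x_p|^{m-1}=r_p(\mathcal{A})\,|x_p|^{m-1}.\]
Dividing through by $|x_p|^{m-1}>0$ yields $|\lambda-a_{p\cdots p}|\leq r_p(\mathcal{A})$, that is, $\lambda\in\Gamma_p(\mathcal{A})\subseteq\Gamma(\mathcal{A})$. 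Since $\lambda$ was an arbitrary point of $\sigma(\mathcal{A})$, the inclusion $\sigma(\mathcal{A})\subseteq\Gamma(\mathcal{A})$ follows.

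I do not expect any genuine obstacle: the argument uses only the definition of a tensor eigenvalue and the triangle inequality. The two steps worth a moment's care are (i) the clean separation of the diagonal term $a_{p\cdots p}x_p^{m-1}$ from the remaining sum, which is exactly what $\delta_{i_1\cdots i_m}$ is designed to encode, and (ii) the observation that choosing $p$ to maximize $|x_k|$ collapses the $(m-1)$-fold product $|x_{i_2}\cdots x_{i_m}|$ to the uniform bound $|x_p|^{m-1}$, which may then be cancelled from both sides. This is the direct generalization of the matrix case $m=2$.
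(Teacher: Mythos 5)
Your proof is correct and is the standard argument; the paper itself does not prove this theorem (it is cited from Qi, reference \cite{Qi3}), so there is no in-paper proof to compare against. Your approach---picking the index $p$ maximizing $|x_p|$, isolating the diagonal term, and bounding the off-diagonal products by $|x_p|^{m-1}$ via the triangle inequality---is precisely the generalization of the matrix Ger\v{s}gorin proof used in the cited source.
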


It is easy to see that when $m=2$, Theorem \ref{th 1.1} reduces to
the well known Ger\v{s}gorin eigenvalue inclusion theorem of
matrices \cite{Ger,Varga}. Here, we call $\Gamma_i(\mathcal {A})$
the $i$-th Ger\v{s}gorin tensor eigenvalue inclusion set. Note that
$\Gamma_i(\mathcal {A})$ is a closed set in the complex plane
$\mathbb{C}$, Hence, $\Gamma(\mathcal {A})$, which consists of the
$n$ sets $\Gamma_i(\mathcal {A})$, is also closed and bounded in
$\mathbb{C}$.

Also in \cite{Qi3}, Qi obtain  another interesting result on
Ger\v{s}gorin tensor eigenvalue inclusion set $\Gamma(\mathcal
{A})$.

\begin{theorem} \emph{\cite{Qi3}}
If $\Gamma_i(\mathcal {A})$ is disjoint with the other
$\Gamma_j(\mathcal {A}),~j\neq i$, then there are exactly
$(m-1)^{n-1}$ eigenvalues which lie in $\Gamma_i(\mathcal {A})$.
Furthermore, if all of $\Gamma_i(\mathcal {A})$,
$i=l_1,l_2,\ldots,l_k$ are connected but disjoint with the other
$\Gamma_j(\mathcal {A})$, $j\neq i$, then there are exactly
$k(m-1)^{n-1}$ eigenvalues which lie in $\bigcup\limits_{i=l_1,l_2,
\ldots, l_k} \Gamma_i(\mathcal {A}) $.
\end{theorem}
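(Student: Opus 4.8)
The plan is to reproduce, in the tensor setting, the classical continuity (homotopy) argument that establishes the analogous eigenvalue count for the Ger\v{s}gorin disks of a matrix. The one external ingredient I would invoke is Qi's characteristic polynomial: for any $\mathcal{B}\in\mathbb{C}^{[m,n]}$ there is a polynomial $\phi_{\mathcal{B}}(\lambda)=\mathrm{res}_x\big(\mathcal{B}x^{m-1}-\lambda x^{[m-1]}\big)$ whose roots, counted with multiplicity, are precisely the eigenvalues of $\mathcal{B}$; it has degree exactly $d:=n(m-1)^{n-1}$ in $\lambda$, its leading coefficient is a nonzero constant independent of $\mathcal{B}$, and all of its lower coefficients are polynomials in the entries of $\mathcal{B}$. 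Consequently, as $\mathcal{B}$ varies continuously the unordered $d$-tuple of eigenvalues varies continuously and no eigenvalue escapes to infinity.

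First I would introduce the homotopy $\mathcal{A}(t)=(a_{i_1\cdots i_m}(t))$, $t\in[0,1]$, with $a_{i\cdots i}(t)=a_{i\cdots i}$ and $a_{i_1\cdots i_m}(t)=t\,a_{i_1\cdots i_m}$ whenever $\delta_{i_1\cdots i_m}=0$. Then $r_j(\mathcal{A}(t))=t\,r_j(\mathcal{A})\le r_j(\mathcal{A})$, so $\Gamma_j(\mathcal{A}(t))\subseteq\Gamma_j(\mathcal{A})$ for all $j$ and all $t$, and by Theorem \ref{th 1.1} every eigenvalue of $\mathcal{A}(t)$ lies in $\Gamma(\mathcal{A})=\bigcup_{j\in N}\Gamma_j(\mathcal{A})$. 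At $t=0$ the tensor $\mathcal{A}(0)$ is diagonal and $\mathcal{A}(0)x^{m-1}=\lambda x^{[m-1]}$ reduces to $a_{i\cdots i}x_i^{m-1}=\lambda x_i^{m-1}$ for each $i$; a direct computation gives $\phi_{\mathcal{A}(0)}(\lambda)=c\prod_{i\in N}(\lambda-a_{i\cdots i})^{(m-1)^{n-1}}$, so each $a_{i\cdots i}$ is an eigenvalue of multiplicity exactly $(m-1)^{n-1}$. Finally, disjointness of $\Gamma_i(\mathcal{A})$ from the other $\Gamma_j(\mathcal{A})$ forces $a_{i\cdots i}\ne a_{j\cdots j}$ for $j\ne i$, since $a_{j\cdots j}\in\Gamma_j(\mathcal{A})$ while $a_{i\cdots i}\notin\Gamma_j(\mathcal{A})$.

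Next comes the separation step. For the first assertion, fix $i$ with $\Gamma_i(\mathcal{A})\cap\Gamma_j(\mathcal{A})=\emptyset$ for all $j\ne i$, and set $S:=\Gamma_i(\mathcal{A})$ and $K:=\bigcup_{j\ne i}\Gamma_j(\mathcal{A})$. These sets are compact and disjoint, hence separated by a positive distance $\rho>0$. Let $N(t)$ be the number of eigenvalues of $\mathcal{A}(t)$ in $S$, counted with multiplicity. Since $\sigma(\mathcal{A}(t))\subseteq S\cup K$ and no eigenvalue ever lies at distance in $(0,\rho)$ from $S$, $N(t)$ equals the number of members of the eigenvalue $d$-tuple within distance $\rho/2$ of $S$; by the continuity established in the first paragraph this integer-valued function is locally constant, hence constant on the connected interval $[0,1]$, so $N(1)=N(0)=(m-1)^{n-1}$. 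The second assertion follows by the identical argument with $S:=\bigcup_{s=1}^{k}\Gamma_{l_s}(\mathcal{A})$ and $K:=\bigcup_{j\notin\{l_1,\ldots,l_k\}}\Gamma_j(\mathcal{A})$: these are again compact and disjoint, every eigenvalue of $\mathcal{A}(t)$ lies in $S\cup K$, and at $t=0$ exactly $k(m-1)^{n-1}$ eigenvalues lie in $S$, namely $a_{l_1\cdots l_1},\ldots,a_{l_k\cdots l_k}$ (pairwise distinct and in $S$) each with multiplicity $(m-1)^{n-1}$. The connectedness hypothesis is used only to identify $S$ as a single ``piece'' of $\Gamma(\mathcal{A})$; it plays no further role.

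I expect the only genuine obstacle to be the analytic input of the first paragraph: one must know that $\phi_{\mathcal{B}}$ has constant degree $n(m-1)^{n-1}$ with non-vanishing leading coefficient and polynomially varying lower coefficients, so that under the homotopy the eigenvalues move continuously, with no change in total multiplicity and no escape to infinity. Granting this, the remainder is the textbook Ger\v{s}gorin continuity argument transported verbatim, together with the elementary determination of the spectrum of a diagonal tensor.
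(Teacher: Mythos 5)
The paper does not prove this theorem; it is quoted from Qi's 2005 paper, which is where the actual argument lives. Your homotopy proof --- scaling the off-diagonal entries by $t$, invoking the resultant-based characteristic polynomial of fixed degree $n(m-1)^{n-1}$ with a nonvanishing constant leading coefficient so that the eigenvalue multiset varies continuously with no escape to infinity, computing the spectrum of the diagonal tensor $\mathcal{A}(0)$ via $\phi_{\mathcal{A}(0)}(\lambda)=c\prod_i(\lambda-a_{i\cdots i})^{(m-1)^{n-1}}$, and then using disjointness of $\Gamma_i(\mathcal{A})$ from $\bigcup_{j\ne i}\Gamma_j(\mathcal{A})$ to make the eigenvalue count in $\Gamma_i(\mathcal{A})$ locally constant on $[0,1]$ --- is precisely Qi's original argument transported from the matrix Ger\v{s}gorin setting, and it is correct.
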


In this paper, we also focus on Ger\v{s}gorin tensor eigenvalue
inclusion set. And we present Minimal Ger\v{s}gorin tensor
eigenvalue inclusion set, give its sufficient and necessary
condition, and research its boundary. For an irreducible tensor, we
give a set which approximates to Minimal Ger\v{s}gorin tensor
eigenvalue inclusion set.

\section{Minimal Ger\v{s}gorin tensor eigenvalue inclusion set}
In this section, we present Minimal Ger\v{s}gorin tensor eigenvalue
inclusion set and study its characteristic. First, a lemma is given.

\begin{lemma} \emph{\cite{Ya}} \label{lemma 2.1} Let $\mathcal{A}=(a_{i_1\cdots i_m})\in \mathbb{C}^{[m,n]}$
and $D=diag(d_1,d_2,\ldots,d_n)$ be a diagonal nonsingular matrix.
If
\[\mathcal{B}=(b_{i_1\cdots
i_m})=\mathcal{A}D^{-(m-1)}\overbrace{DD\cdots D}\limits^{m-1},\]
where \[b_{i_1\cdots i_m}=d_{i_1}^{-(m-1)}a_{i_1i_2\cdots
i_m}d_{i_2}\cdots d_{i_m},~ i_1,\ldots, i_m\in N,\] then $
\mathcal{A}, ~\mathcal{B}$ have the same eigenvalues.
\end{lemma}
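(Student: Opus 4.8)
The plan is to prove that $\sigma(\mathcal{A})=\sigma(\mathcal{B})$ by exhibiting an explicit correspondence between eigenvectors. First I would suppose $\lambda\in\sigma(\mathcal{A})$ with eigenvector $x\in\mathbb{C}^n\backslash\{0\}$, so that $\mathcal{A}x^{m-1}=\lambda x^{[m-1]}$, and set $y=D^{-1}x$. Since $D$ is nonsingular, $y\neq 0$. The key step is the direct computation of the $i$th component of $\mathcal{B}y^{m-1}$: substituting the formula $b_{ii_2\cdots i_m}=d_i^{-(m-1)}a_{ii_2\cdots i_m}d_{i_2}\cdots d_{i_m}$ and $y_{i_j}=d_{i_j}^{-1}x_{i_j}$, the factors $d_{i_j}$ and $d_{i_j}^{-1}$ cancel pairwise, leaving
\[
(\mathcal{B}y^{m-1})_i = d_i^{-(m-1)}\sum_{i_2,\ldots,i_m\in N} a_{ii_2\cdots i_m}x_{i_2}\cdots x_{i_m} = d_i^{-(m-1)}(\mathcal{A}x^{m-1})_i.
\]
Using the eigenvalue equation for $\mathcal{A}$, this equals $d_i^{-(m-1)}\lambda x_i^{m-1}=\lambda(d_i^{-1}x_i)^{m-1}=\lambda y_i^{m-1}=\lambda(y^{[m-1]})_i$. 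Hence $\mathcal{B}y^{m-1}=\lambda y^{[m-1]}$, so $\lambda\in\sigma(\mathcal{B})$, and therefore $\sigma(\mathcal{A})\subseteq\sigma(\mathcal{B})$.

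For the reverse inclusion, I would observe that the construction is symmetric: replacing $D$ by the (also nonsingular) diagonal matrix $D^{-1}$ in the definition sends $\mathcal{B}$ back to $\mathcal{A}$, since $(d_{i_1}^{-1})^{-(m-1)}b_{i_1\cdots i_m}d_{i_2}^{-1}\cdots d_{i_m}^{-1}=a_{i_1\cdots i_m}$. Applying the computation above with the roles of $\mathcal{A}$, $\mathcal{B}$, and $D$ exchanged accordingly yields $\sigma(\mathcal{B})\subseteq\sigma(\mathcal{A})$, and hence $\sigma(\mathcal{A})=\sigma(\mathcal{B})$; in particular the two tensors have the same eigenvalues.

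I do not anticipate a genuine obstacle here; the entire content is the index bookkeeping in the cancellation $d_{i_2}\cdots d_{i_m}\,y_{i_2}\cdots y_{i_m}=x_{i_2}\cdots x_{i_m}$ together with the scalar identity $d_i^{-(m-1)}x_i^{m-1}=(d_i^{-1}x_i)^{m-1}$, which is precisely what makes the diagonal similarity interact correctly with the nonlinear map $x\mapsto x^{[m-1]}$ on the right-hand side of \eqref{eigvalue}. The one point worth a moment's care is confirming that $\mathcal{A}\mapsto\mathcal{B}$ is invertible in the appropriate sense, so that a single computation delivers both inclusions; the substitution $D\mapsto D^{-1}$ settles this.
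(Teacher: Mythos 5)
Your proof is correct, and it is the standard argument for this diagonal-similarity result; the paper itself cites the lemma from the reference \cite{Ya} without reproducing a proof, so there is nothing in the paper to compare against. The computation $(\mathcal{B}y^{m-1})_i=d_i^{-(m-1)}(\mathcal{A}x^{m-1})_i$ with $y=D^{-1}x$, together with the observation that replacing $D$ by $D^{-1}$ inverts the transformation, is exactly what one would expect and is carried out without error.
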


From Lemma \ref{lemma 2.1}, we obtain the following tensor
eigenvalue inclusion set.

\begin{theorem}
\label{th 2.1} Let $\mathcal {A}=(a_{i_1\cdots i_m})\in
\mathbb{C}^{[m,n]} $ and $x=(x_1,x_2,\ldots,x_n)^T$ be an entrywise
positive vector, i.e., $x=(x_1,x_2,\ldots,x_n)^T>0$. Then
\begin{equation}\label{equation2.1}\sigma(\mathcal {A})\subseteq \Gamma^{r^x}(\mathcal
{A})=\bigcup\limits_{i\in N}\Gamma^{r^x}_i(\mathcal
{A}),\end{equation} where
\[\Gamma^{r^x}_i(\mathcal {A})=\left\{z\in \mathbb{C}: |z-a_{i\cdots
i}|\leq r^x_i(\mathcal {A})= \sum\limits_{i_2,\ldots,i_m\in N,\atop
\delta_{ii_2\cdots i_m}=0} \frac{|a_{ii_2\cdots i_m}|x_{i_2}\cdots
x_{i_m}}{x_{i}^{m-1}} \right\}.\] Furthermore,
\begin{equation}\label{equation2.2}\sigma(\mathcal {A})\subseteq \bigcap\limits_{x>0}\Gamma^{r^x}(\mathcal {A}).\end{equation}
\end{theorem}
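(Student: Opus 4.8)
The plan is to reduce the inclusion (\ref{equation2.1}) to Qi's Ger\v{s}gorin theorem (Theorem \ref{th 1.1}) through the diagonal scaling of Lemma \ref{lemma 2.1}. Fix an entrywise positive vector $x=(x_1,\ldots,x_n)^T>0$ and set $D=\mathrm{diag}(x_1,\ldots,x_n)$; since every $x_i>0$, $D$ is a nonsingular diagonal matrix, so Lemma \ref{lemma 2.1} applies. Let $\mathcal{B}=(b_{i_1\cdots i_m})=\mathcal{A}D^{-(m-1)}\overbrace{DD\cdots D}^{m-1}$, so that $b_{i_1\cdots i_m}=x_{i_1}^{-(m-1)}a_{i_1i_2\cdots i_m}x_{i_2}\cdots x_{i_m}$. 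By the lemma, $\mathcal{A}$ and $\mathcal{B}$ have the same eigenvalues, i.e. $\sigma(\mathcal{A})=\sigma(\mathcal{B})$.

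Next I would identify the Ger\v{s}gorin set of $\mathcal{B}$ with $\Gamma^{r^x}(\mathcal{A})$. The diagonal entries are preserved: $b_{i\cdots i}=x_i^{-(m-1)}a_{i\cdots i}x_i^{m-1}=a_{i\cdots i}$, so the disc centers agree. For the radii, positivity of $x$ lets the positive scalars pass through the modulus, giving $|b_{ii_2\cdots i_m}|=|a_{ii_2\cdots i_m}|x_{i_2}\cdots x_{i_m}/x_i^{m-1}$, whence
\[r_i(\mathcal{B})=\sum\limits_{i_2,\ldots,i_m\in N,\atop \delta_{ii_2\cdots i_m}=0}\frac{|a_{ii_2\cdots i_m}|x_{i_2}\cdots x_{i_m}}{x_i^{m-1}}=r^x_i(\mathcal{A})\]
for each $i\in N$. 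Thus $\Gamma_i(\mathcal{B})=\Gamma^{r^x}_i(\mathcal{A})$ and $\Gamma(\mathcal{B})=\Gamma^{r^x}(\mathcal{A})$. Applying Theorem \ref{th 1.1} to $\mathcal{B}$ now yields $\sigma(\mathcal{A})=\sigma(\mathcal{B})\subseteq\Gamma(\mathcal{B})=\Gamma^{r^x}(\mathcal{A})$, which is (\ref{equation2.1}).

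Finally, (\ref{equation2.2}) is immediate: the inclusion in (\ref{equation2.1}) holds for every entrywise positive $x$, so $\sigma(\mathcal{A})$ lies in the intersection $\bigcap_{x>0}\Gamma^{r^x}(\mathcal{A})$. I do not expect a genuine obstacle here; the argument is essentially the verification that the diagonal-scaling lemma transforms the Ger\v{s}gorin radii in the prescribed way. The one point deserving attention is the hypothesis $x>0$, which is used twice --- to make $D$ nonsingular so that Lemma \ref{lemma 2.1} is available, and to move the factors $x_{i_2}\cdots x_{i_m}$ and $x_i^{-(m-1)}$ outside $|\cdot|$ without introducing phase or sign corrections.
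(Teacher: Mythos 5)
Your proof is correct and follows exactly the same route as the paper: apply the diagonal-scaling Lemma \ref{lemma 2.1} with the diagonal matrix built from $x$, observe that the scaled tensor $\mathcal{B}$ has the same diagonal entries and row sums $r_i(\mathcal{B})=r^x_i(\mathcal{A})$, invoke Theorem \ref{th 1.1} on $\mathcal{B}$, and intersect over all $x>0$. The only difference is cosmetic (you write $D$ where the paper writes $X$).
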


\begin{proof} Let $X=diag(x_1,x_2,\ldots,x_n)$ and $\mathcal {B}=\mathcal{A}X^{-(m-1)}\overbrace{XX\cdots
X}\limits^{m-1}=(b_{i_1\cdots i_m})$. It is obvious that $X$ is
nonsingular and $\sigma(\mathcal {B})=\sigma(\mathcal{A})$ from
Lemma \ref{lemma 2.1}. From Theorem \ref{th 1.1}, we have
\[\sigma(\mathcal {B})\subseteq \Gamma (\mathcal
{B})=\bigcup\limits_{i\in N} \Gamma_i(\mathcal {B}),
\]
where \[\Gamma_i(\mathcal {B})=\left\{z\in \mathbb{C}:|z-b_{i\cdots
i}|\leq r_i(\mathcal {B})=\sum\limits_{i_2,\ldots,i_m\in N,\atop
\delta_{ii_2\ldots i_m}=0} |b_{ii_2\cdots i_m}|\right\}\] Note that
$b_{i\cdots i}=a_{i\cdots i}$ and $r_i(\mathcal {B})=r^x_i(\mathcal
{A})$. Hence, $\Gamma_i(\mathcal {B})=\Gamma^{r^x}_i(\mathcal {A})$,
consequently, $\Gamma (\mathcal {B})=\Gamma^{r^x}(\mathcal {A})$.
Therefore,
\[ \sigma(\mathcal{A})=\sigma(\mathcal {B})\subseteq \Gamma
(\mathcal {B})=\Gamma^{r^x}(\mathcal {A}). \]

Furthermore, for any $x>0,~x\in \mathbb{R}^n$, (\ref{equation2.1})
also holds. Hence, (\ref{equation2.2}) follows. \end{proof}

The set $\bigcap\limits_{x>0}\Gamma^{r^x}(\mathcal {A})$ in Theorem
\ref{th 2.1} is of interest theoretically because it provides a set,
containing all the eigenvalues of $\mathcal {A}$, which is called
Minimal Ger\v{s}gorin tensor eigenvalue inclusion set defined as
follows.

\begin{definition} \label{minimal ger} Let $\mathcal {A}=(a_{i_1\cdots i_m})\in
\mathbb{C}^{[m,n]}$. Then \begin{equation} \Gamma^R(\mathcal
{A})=\bigcap\limits_{x \in \mathbb{R}^n, \atop
x>0}\Gamma^{r^x}(\mathcal {A})
\end{equation}
is Minimal Ger\v{s}gorin tensor eigenvalue inclusion set of
$\mathcal {A}$, related to the collection of all weighted sums,
$r^x_i(\mathcal {A})$, where $x=(x_1,x_2,\ldots,x_n)^T>0$.
\end{definition}

Next, a sufficient and necessary condition for the elements
belonging to Minimal Ger\v{s}gorin tensor eigenvalue inclusion set
is provided. Before that, we give some results involving the
$Perron-Frobenius$ theory of nonnegative tensors \cite{Ya}. Given a
real tensor $\mathcal {A}$, we call $\mathcal {A}$ nonnegative,
denoted by $\mathcal {A}\geq 0$, if every of its entries is
nonnegative.

\begin{definition} \cite[Definition 2.1]{Ch} A tensor
$\mathcal {A}=(a_{i_1\cdots,i_m})\in \mathbb{C}^{[m,n]}$ is called
reducible, if there exists a nonempty proper index subset $I\subset
N$ such that \[|a_{i_1\cdots,i_m }|=0,~\forall i_1\in I, ~\forall
i_2,\ldots,i_m\notin I.\] If $\mathcal {A}$ is not reducible, then
we call  $\mathcal {A}$ irreducible.\end{definition}

\begin{lemma} \label{lemma 2.2}\emph{\cite{Ya}}
\label{nonnegative-tensor-pf} If $\mathcal{A} \in
\mathbb{R}^{[m,n]}$ is nonnegative, then the spectral radius
\[\rho(\mathcal{A})=\max\{|\lambda|: \lambda \in \sigma(\mathcal {A})\}\] is an eigenvalue with a nonnegative eigenvector
$x\neq 0$ corresponding to it. Moreover, if $\mathcal{A}$ is
irreducible, then $\rho(\mathcal{A})>0$ and $x$ is positive.
\end{lemma}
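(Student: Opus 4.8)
The plan is to prove this along the classical nonlinear Perron--Frobenius route: a fixed-point argument for the existence of a nonnegative eigenvector, a perturbation passage to the limit to reach the merely nonnegative case, and a Collatz--Wielandt-type variational identity to pin the resulting eigenvalue to $\rho(\mathcal{A})$. I would begin by recasting the eigenvalue relation through the map $T_{\mathcal{A}}\colon\mathbb{R}^n_+\to\mathbb{R}^n_+$ given by $(T_{\mathcal{A}}x)_i=\bigl((\mathcal{A}x^{m-1})_i\bigr)^{1/(m-1)}$. Since $\mathcal{A}\geq 0$, this is well defined on the nonnegative orthant, continuous, positively homogeneous of degree one and order-preserving, and for $x>0$ one has $T_{\mathcal{A}}x=\mu x$ if and only if $\mathcal{A}x^{m-1}=\mu^{m-1}x^{[m-1]}$; thus eigenpairs of $\mathcal{A}$ with nonnegative eigenvalue correspond to fixed directions of $T_{\mathcal{A}}$, with $\lambda=\mu^{m-1}$.

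For existence I would first treat a strictly positive tensor $\mathcal{B}>0$: then $T_{\mathcal{B}}$ maps the simplex $\Delta=\{x\geq 0:\sum_i x_i=1\}$ into the open positive orthant, so $x\mapsto T_{\mathcal{B}}x/\|T_{\mathcal{B}}x\|_1$ is a continuous self-map of the compact convex set $\Delta$, and Brouwer's fixed-point theorem produces $x^{\mathcal{B}}\in\Delta$ with $T_{\mathcal{B}}x^{\mathcal{B}}=\mu_{\mathcal{B}}x^{\mathcal{B}}$, $\mu_{\mathcal{B}}>0$, equivalently $\mathcal{B}(x^{\mathcal{B}})^{m-1}=\lambda_{\mathcal{B}}(x^{\mathcal{B}})^{[m-1]}$ with $x^{\mathcal{B}}>0$ and $\lambda_{\mathcal{B}}=\mu_{\mathcal{B}}^{m-1}>0$. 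For a general nonnegative $\mathcal{A}$ I would apply this to $\mathcal{A}_{\varepsilon}=\mathcal{A}+\varepsilon\mathcal{E}$, where $\mathcal{E}$ is the all-ones tensor, obtaining $(\lambda_{\varepsilon},x^{(\varepsilon)})$ with $x^{(\varepsilon)}\in\Delta$, and then let $\varepsilon\to 0^{+}$: compactness of $\Delta$ and a uniform bound on $\lambda_{\varepsilon}$ give a subsequence along which $x^{(\varepsilon)}\to x^{*}\in\Delta$ (hence $x^{*}\geq 0$, $x^{*}\neq 0$) and $\lambda_{\varepsilon}\to\lambda^{*}\geq 0$, and passing to the limit in $\mathcal{A}_{\varepsilon}(x^{(\varepsilon)})^{m-1}=\lambda_{\varepsilon}(x^{(\varepsilon)})^{[m-1]}$ yields $\mathcal{A}(x^{*})^{m-1}=\lambda^{*}(x^{*})^{[m-1]}$, so $\lambda^{*}$ is an eigenvalue of $\mathcal{A}$ with nonnegative eigenvector $x^{*}\neq 0$.

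Identifying $\lambda^{*}$ with $\rho(\mathcal{A})$ is the step I expect to be the main obstacle, and here I would invoke (or, for a self-contained treatment, develop) the Collatz--Wielandt machinery for monotone, positively homogeneous maps: the min--max identity $\rho(\mathcal{A})=\inf_{x>0}\max_i(\mathcal{A}x^{m-1})_i/x_i^{m-1}=\sup_{x>0}\min_i(\mathcal{A}x^{m-1})_i/x_i^{m-1}$, valid for every nonnegative tensor. From its upper form, $\rho$ is monotone under the entrywise ordering of nonnegative tensors, so $\lambda_{\varepsilon}=\rho(\mathcal{A}_{\varepsilon})\geq\rho(\mathcal{A})$ for each $\varepsilon>0$, whence $\lambda^{*}=\lim_{\varepsilon\to 0^{+}}\lambda_{\varepsilon}\geq\rho(\mathcal{A})$; and since $\lambda^{*}\geq 0$ is itself an eigenvalue, $\lambda^{*}\leq\rho(\mathcal{A})$, so $\lambda^{*}=\rho(\mathcal{A})$. (Alternatively one checks directly that the constructed $x^{*}$ is extremal for the min--max quotient; and the needed inequality on the modulus side follows from $|\lambda|\,|y|^{[m-1]}\leq \mathcal{A}|y|^{m-1}$ for any eigenpair $(\lambda,y)$, obtained by taking moduli componentwise and using $\mathcal{A}\geq 0$.)

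Finally, in the irreducible case I would upgrade $x^{*}\geq 0$ to $x^{*}>0$: if the zero set $J=\{k:x^{*}_k=0\}$ were nonempty and proper, then for each $k\in J$ the relation $0=\lambda^{*}(x^{*}_k)^{m-1}=\sum_{i_2,\ldots,i_m}a_{ki_2\cdots i_m}x^{*}_{i_2}\cdots x^{*}_{i_m}$, a sum of nonnegative terms, would force $a_{ki_2\cdots i_m}=0$ whenever $i_2,\ldots,i_m\notin J$ — precisely the condition that $\mathcal{A}$ is reducible, a contradiction; hence $J=\emptyset$ and $x^{*}>0$. Then $\rho(\mathcal{A})=\lambda^{*}>0$, because $\lambda^{*}=0$ together with $x^{*}>0$ would force every entry of $\mathcal{A}$ to vanish, once more contradicting irreducibility. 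The reformulation through $T_{\mathcal{A}}$ and this closing irreducibility argument are routine; the genuinely delicate content is the nonlinear Collatz--Wielandt/Gelfand-type identity and the verification that the eigenvalue surviving the perturbation limit is the maximal one.
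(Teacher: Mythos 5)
The paper offers no proof of this lemma --- it is imported verbatim from the reference~\cite{Ya}, which builds on Chang--Pearson--Zhang~\cite{Ch} --- so there is no internal argument to compare against. Your reconstruction follows the same route those sources take: reformulate the eigenvalue problem through the order-preserving, degree-one homogeneous map $T_{\mathcal{A}}$, use Brouwer on the simplex to settle the strictly positive case, pass to the general nonnegative case by perturbing with $\varepsilon\mathcal{E}$ and extracting a convergent subsequence, and finally appeal to a Collatz--Wielandt variational characterization to identify the surviving eigenvalue with $\rho(\mathcal{A})$. The irreducibility upgrade at the end (the zero set $J$ would witness reducibility, and $\rho=0$ with a positive eigenvector would force $\mathcal{A}=0$) is exactly right.

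The one spot that deserves more care than you give it in the main line of argument is the identification $\lambda^{*}=\rho(\mathcal{A})$. Invoking the full two-sided min--max identity ``valid for every nonnegative tensor'' as an external fact is circular, since that identity (Lemma~\ref{lemma 2.4} / Corollary~\ref{corollary 2.1} in this paper) is normally proved \emph{after} the Perron--Frobenius theorem and uses it. Your parenthetical alternative is the correct repair, and it is worth stating that it suffices in full: one only needs the ``upper'' Collatz--Wielandt inequality $\rho(\mathcal{A})\le\max_i(\mathcal{A}x^{m-1})_i/x_i^{m-1}$ for every $x>0$, which is \emph{not} circular because it follows directly from the Ger\v{s}gorin bound (Theorem~\ref{th 1.1}) applied to the diagonally similar tensor $\mathcal{A}X^{-(m-1)}X\cdots X$ of Lemma~\ref{lemma 2.1}. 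With only that inequality in hand: plugging in $x=x^{(\varepsilon)}>0$ gives $\rho(\mathcal{A}_\varepsilon)\le\lambda_\varepsilon$, while $\lambda_\varepsilon\le\rho(\mathcal{A}_\varepsilon)$ is trivial, so $\lambda_\varepsilon=\rho(\mathcal{A}_\varepsilon)$; and since $\mathcal{A}\le\mathcal{A}_\varepsilon$ entrywise, the same inequality at $x=x^{(\varepsilon)}$ gives $\rho(\mathcal{A})\le\lambda_\varepsilon$, whence $\rho(\mathcal{A})\le\lambda^{*}\le\rho(\mathcal{A})$ in the limit. So your argument is sound, but the load-bearing step should be the one-sided inequality you relegate to a parenthesis, not the full min--max theorem.
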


\begin{lemma} \label{lemma 2.4} \emph{\cite[Theorem 5.3]{Ya}}
Let $\mathcal {A} \in \mathbb{R}^{[m,n]}$ be nonnegative. Then
\[\rho(\mathcal {A}) =\max\limits_{x\geq 0, x\neq 0}
\min\limits_{x_i>0} \frac{(\mathcal {A}x^{m-1})_i} {x^{m-1}_i}=
\min\limits_{x\geq 0, x\neq 0} \max\limits_{x_i>0} \frac{(\mathcal
{A}x^{m-1})_i} {x^{m-1}_i}.\]
\end{lemma}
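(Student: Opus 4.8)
The statement is a Collatz--Wielandt-type characterisation of $\rho(\mathcal A)$, and the plan is to prove the two identities in parallel, separating the two easy inequalities from the two substantive ones. The easy halves --- that $\rho(\mathcal A)\le\max_{x\ge 0,\,x\ne 0}\min_{x_i>0}(\mathcal Ax^{m-1})_i/x_i^{m-1}$ and $\rho(\mathcal A)\ge\min_{x\ge 0,\,x\ne 0}\max_{x_i>0}(\mathcal Ax^{m-1})_i/x_i^{m-1}$ --- I would get in one stroke from Lemma~\ref{lemma 2.2}: it yields an eigenvector $v\ge 0$, $v\ne 0$, with $\mathcal Av^{m-1}=\rho(\mathcal A)v^{[m-1]}$, and on its support $I=\{i:v_i>0\}$ one has $(\mathcal Av^{m-1})_i/v_i^{m-1}=\rho(\mathcal A)$ for every $i\in I$; thus $v$ realises both the inner $\min$ and the inner $\max$ as the single value $\rho(\mathcal A)$, so the outer $\max$ is $\ge\rho(\mathcal A)$ and the outer $\min$ is $\le\rho(\mathcal A)$.

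The substance is the reverse estimates, where the natural matrix proof (pair $\mathcal Ax^{m-1}\ge tx^{[m-1]}$ with a positive left eigenvector) has no tensor analogue. I would instead iterate the map $F\colon\mathbb{R}^n_{\ge 0}\to\mathbb{R}^n_{\ge 0}$, $F(x)_i=\bigl((\mathcal Ax^{m-1})_i\bigr)^{1/(m-1)}$, which is order-preserving and positively homogeneous of degree $1$ and satisfies $F(x)\ge\tau x\iff\mathcal Ax^{m-1}\ge\tau^{m-1}x^{[m-1]}$ (and the same with $\le$). For a strictly positive tensor $\mathcal B$ (hence irreducible) Lemma~\ref{lemma 2.2} gives $u>0$ with $F(u)=\mu u$, $\mu=\rho(\mathcal B)^{1/(m-1)}$, so $F^{k}(u)=\mu^{k}u$. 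If $x\ge 0$, $x\ne 0$ and $\mathcal Bx^{m-1}\ge tx^{[m-1]}$ (WLOG $t\ge 0$), then monotonicity and homogeneity give $F^{k}(x)\ge t^{k/(m-1)}x$ by induction, while $x\le Cu$ for $C=\max_i x_i/u_i$ gives $F^{k}(x)\le C\mu^{k}u$; at a coordinate $j$ with $x_j>0$ this forces $(t^{1/(m-1)}/\mu)^{k}$ to remain bounded, i.e.\ $t\le\rho(\mathcal B)$. Running the same iteration from $\mathcal Bx^{m-1}\le sx^{[m-1]}$ and $u\le C'x$ (possible because $x>0$ is then forced) yields $s\ge\rho(\mathcal B)$. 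To pass from positive $\mathcal B$ to an arbitrary nonnegative $\mathcal A$ in the $\max$-$\min$ direction I would put $\mathcal A_{\varepsilon}=\mathcal A+\varepsilon\mathcal E$ with $\mathcal E$ the all-ones tensor, observe $\mathcal A_{\varepsilon}x^{m-1}\ge\mathcal Ax^{m-1}\ge tx^{[m-1]}$, conclude $t\le\rho(\mathcal A_{\varepsilon})$, and let $\varepsilon\downarrow 0$ using monotonicity and continuity of the spectral radius of nonnegative tensors; this gives the first identity.

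The $\min$-$\max$ direction, $s\ge\rho(\mathcal A)$, goes through by the same device \emph{provided} the competing $x$ is strictly positive, since then the domination $u\le C'x$ by the Perron vector $u$ of Lemma~\ref{lemma 2.2} is available. When $\mathcal A$ is irreducible this is free: if $x_j=0$ for some $j$ then $\mathcal Ax^{m-1}\le sx^{[m-1]}$ and nonnegativity force $(\mathcal Ax^{m-1})_j=0$, hence $a_{j i_2\cdots i_m}=0$ whenever $x_{i_2},\dots,x_{i_m}>0$, i.e.\ the zero set of $x$ witnesses reducibility of $\mathcal A$ --- a contradiction; so $x>0$. The step I expect to be the real obstacle is the $\min$-$\max$ inequality for a merely nonnegative (reducible) $\mathcal A$, where there is no positive Perron eigenvector to sandwich against and where the naive fix --- replacing a candidate $x\ge 0$ by $x+\delta e$ with $e$ the all-ones vector --- fails, because the normalisation $x^{[m-1]}$ makes the perturbed inequality break at the formerly zero coordinates as soon as $m\ge 3$ (there $\delta^{m-1}\ll\delta$). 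I would resolve it either through the nonlinear Perron--Frobenius theory for the order-preserving homogeneous map $F$ (its cone spectral radius equals $\rho(\mathcal A)^{1/(m-1)}$ and coincides with the relevant infimum of upper Collatz--Wielandt numbers), or by reducing to the finitely many irreducible principal subtensors in the canonical block decomposition of a reducible nonnegative tensor and using that $\rho(\mathcal A)$ is the largest of their spectral radii; this is the technical heart, and is the point settled in \cite{Ya}.
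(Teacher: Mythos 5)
The easy inequalities from the Perron eigenvector, the positive-tensor iteration through the order-preserving degree-one map $F$, and the $\varepsilon$-perturbation that transfers the $\max$--$\min$ bound from positive to general nonnegative $\mathcal{A}$ are all sound, and you are right to single out the $\min$--$\max$ half as the place where the argument strains. But the strain is fatal, and the irreducible patch you propose does not hold: from $s=\max_{x_i>0}(\mathcal{A}x^{m-1})_i/x_i^{m-1}$ you only get $(\mathcal{A}x^{m-1})_i\leq s\,x_i^{m-1}$ on the \emph{support} of $x$; at a zero coordinate $j$ there is no constraint at all, $(\mathcal{A}x^{m-1})_j$ may be strictly positive, and irreducibility is not contradicted. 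The pointwise vector inequality $\mathcal{A}x^{m-1}\leq s\,x^{[m-1]}$ that your iteration feeds on is simply not available when $x$ has zeros, so neither the ``$x>0$'' deduction nor the subsequent sandwich against the Perron vector goes through.

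In fact the second identity, as printed, is false already for $m=2$ and irreducible $\mathcal{A}$. Take $A=\left(\begin{array}{cc}0&1\\1&0\end{array}\right)$, so $\rho(A)=1$, and $x=(1,0)^T$; then $(Ax)_1=0$, hence $\max_{x_i>0}(Ax)_i/x_i=0$, and the outer minimum over $x\geq 0$, $x\neq 0$ is at most $0<\rho(A)$. The version that is true --- and the one the paper actually uses downstream --- replaces $\min_{x\geq 0,\,x\neq 0}$ by $\inf_{x>0}$, which is Corollary~\ref{corollary 2.1}: for strictly positive $x$ the vector inequality does hold at every coordinate, your iteration closes cleanly, and no reducible-block analysis is needed. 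So the ``technical heart'' you defer to \cite{Ya} is not a proof you are missing; it cannot exist for the formula as stated. Note also that the paper itself gives no argument here: Theorem~5.3 of \cite{Ya} contains only the $\max$--$\min$ identity, and the paper's accompanying Remark merely asserts, without justification, that the $\min$--$\max$ ``is proved easily'' --- an assertion that is incorrect for the displayed formula.
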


\begin{remark}
The first equality is only given in Theorem 5.3 of \cite{Ya}. And
similar to the proof of  Theorem 5.3 of \cite{Ya}, the second is
proved easily.
\end{remark}

From Lemma \ref{lemma 2.4}, we can obtain the following result.

\begin{corollary} \label{corollary 2.1}
Let $\mathcal {A} \in \mathbb{R}^{[m,n]}$ be nonnegative. Then
\[\rho(\mathcal {A}) =  \sup\limits_{x>0} \min\limits_{i\in N}
\frac{(\mathcal {A}x^{m-1})_i} {x^{m-1}_i}=\inf\limits_{x>0}
\max\limits_{i\in N} \frac{(\mathcal {A}x^{m-1})_i} {x^{m-1}_i}.\]
\end{corollary}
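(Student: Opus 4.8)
The plan is to read Corollary \ref{corollary 2.1} directly off Lemma \ref{lemma 2.4}. Write $f_i(x)=(\mathcal{A}x^{m-1})_i/x_i^{m-1}$, which is continuous and positively homogeneous of degree $0$ on the open orthant $\{x>0\}$; on that orthant the index sets $\{i:x_i>0\}$ and $N$ coincide, so $\min_{x_i>0}f_i(x)=\min_{i\in N}f_i(x)$ and $\max_{x_i>0}f_i(x)=\max_{i\in N}f_i(x)$. Since moreover $\{x>0\}\subseteq\{x\ge 0,\ x\neq 0\}$, passing to the infimum/supremum over the smaller set together with Lemma \ref{lemma 2.4} gives the two ``easy'' inequalities for free:
\[ \sup_{x>0}\min_{i\in N}f_i(x)\le\max_{x\ge 0,\,x\neq 0}\min_{x_i>0}f_i(x)=\rho(\mathcal{A}),\qquad \inf_{x>0}\max_{i\in N}f_i(x)\ge\min_{x\ge 0,\,x\neq 0}\max_{x_i>0}f_i(x)=\rho(\mathcal{A}). \]
It therefore remains to prove $\sup_{x>0}\min_{i\in N}f_i(x)\ge\rho(\mathcal{A})$ and $\inf_{x>0}\max_{i\in N}f_i(x)\le\rho(\mathcal{A})$.

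Both of these come from a \emph{positive} eigenvector for $\rho(\mathcal{A})$. When $\mathcal{A}$ is irreducible this is immediate: Lemma \ref{lemma 2.2} furnishes $x^{\star}>0$ with $\mathcal{A}(x^{\star})^{m-1}=\rho(\mathcal{A})(x^{\star})^{[m-1]}$, hence $f_i(x^{\star})=\rho(\mathcal{A})$ for every $i\in N$, so $\min_i f_i(x^{\star})=\max_i f_i(x^{\star})=\rho(\mathcal{A})$ and both remaining inequalities hold. For a general nonnegative $\mathcal{A}$ I would approximate by the irreducible tensors $\mathcal{A}_\varepsilon=\mathcal{A}+\varepsilon\mathcal{J}$, $\varepsilon>0$, where $\mathcal{J}$ is the all-ones tensor, and let $\varepsilon\to 0^{+}$. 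Monotonicity $\rho(\mathcal{A})\le\rho(\mathcal{A}_\varepsilon)$ is clear from Lemma \ref{lemma 2.4} because $f_i^{\mathcal{A}}\le f_i^{\mathcal{A}_\varepsilon}$ on $\{x>0\}$; and feeding a fixed Perron eigenvector $y\ge 0$, $y\ne 0$, of $\mathcal{A}$ into the min--max formula of Lemma \ref{lemma 2.4} for $\mathcal{A}_\varepsilon$ gives $\rho(\mathcal{A}_\varepsilon)\le\max_{y_i>0}\big(\rho(\mathcal{A})+\varepsilon(\|y\|_1/y_i)^{m-1}\big)\le\rho(\mathcal{A})+C_y\varepsilon$, whence $\rho(\mathcal{A}_\varepsilon)\to\rho(\mathcal{A})$. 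Applying the irreducible case to each $\mathcal{A}_\varepsilon$ and using $f_i^{\mathcal{A}}\le f_i^{\mathcal{A}_\varepsilon}$ then yields $\inf_{x>0}\max_{i\in N}f_i^{\mathcal{A}}(x)\le\inf_{x>0}\max_{i\in N}f_i^{\mathcal{A}_\varepsilon}(x)=\rho(\mathcal{A}_\varepsilon)\to\rho(\mathcal{A})$, which settles the $\inf$--$\max$ identity in full generality.

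The main obstacle is the $\sup$--$\min$ reverse inequality $\sup_{x>0}\min_{i\in N}f_i^{\mathcal{A}}(x)\ge\rho(\mathcal{A})$ when $\mathcal{A}$ is only nonnegative: the same perturbation is of no help here, since $f_i^{\mathcal{A}}\le f_i^{\mathcal{A}_\varepsilon}$ only gives $\sup_{x>0}\min_i f_i^{\mathcal{A}}\le\rho(\mathcal{A}_\varepsilon)$, the wrong direction, and one cannot simply perturb a boundary Perron vector of $\mathcal{A}$ because the correction weights $(\mathcal{J}x^{m-1})_i/x_i^{m-1}=(\|x\|_1/x_i)^{m-1}$ blow up near $\partial\{x>0\}$. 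The route I would try is structural: if the nonnegative Perron vector $y$ of Lemma \ref{lemma 2.2} is supported on $I\subset N$ with $I\ne N$, then $y|_I>0$ is a positive eigenvector of the principal subtensor $\mathcal{A}_{II}$ for the eigenvalue $\rho(\mathcal{A})$, so I would pass to $\mathcal{A}_{II}$ (iterating down to an irreducible principal subtensor), apply the irreducible case there, and lift the near-optimal positive vector on $I$ back to $N$ by filling the remaining coordinates with a small $\delta>0$ and sending $\delta\to 0$. Verifying that the coordinates outside $I$ do not spoil $\min_{i\in N}f_i$ in the limit is exactly where the genuine work lies; this is the step at which an irreducibility hypothesis on $\mathcal{A}$ (as used throughout the rest of the paper) appears to be needed, and under that hypothesis the whole statement collapses to the clean argument of the second paragraph.
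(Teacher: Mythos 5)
The paper offers no proof of Corollary~\ref{corollary 2.1}; it is asserted to follow from Lemma~\ref{lemma 2.4} with no further argument. Your more careful reading exposes a real problem, and your closing suspicion can in fact be sharpened to a counterexample: the $\sup$--$\min$ identity is false for reducible nonnegative tensors. Take $m=n=2$ and $\mathcal{A}=\mathrm{diag}(2,1)$. Then for every $x>0$ one has $f_1(x)=2$ and $f_2(x)=1$, so $\sup_{x>0}\min_{i\in N}f_i(x)=1\ne 2=\rho(\mathcal{A})$. The ``genuine work'' you anticipated at the boundary cannot be done; irreducibility is essential, and the statement of the corollary is overbroad. (The same example with $x=(0,1)^T$ shows that the second equality of Lemma~\ref{lemma 2.4}, $\rho(\mathcal{A})=\min_{x\ge 0,\,x\ne 0}\max_{x_i>0}f_i(x)$, already fails: one gets $1<2$. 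This happens even for positive tensors, e.g.\ the all-ones $2\times 2$ matrix with $x=(1,0)^T$.)

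That defect in Lemma~\ref{lemma 2.4} propagates into two steps of your argument that you labeled as routine. Your ``easy'' inequality $\inf_{x>0}\max_i f_i(x)\ge\rho(\mathcal{A})$ and your bound $\rho(\mathcal{A}_\varepsilon)\le\rho(\mathcal{A})+C_y\varepsilon$ are both obtained by quoting the second equality of Lemma~\ref{lemma 2.4} over $\{x\ge 0,\ x\ne 0\}$, which is unjustified. Both conclusions are nevertheless true: the first follows from the comparison argument that $\mathcal{A}x^{m-1}\le\alpha x^{[m-1]}$ with $x>0$ forces $\rho(\mathcal{A})\le\alpha$ (compare with a nonnegative Perron vector scaled under $x$); the second can be recovered by taking normalized Perron vectors $u_\varepsilon>0$ of $\mathcal{A}_\varepsilon$, extracting a convergent subsequence $u_{\varepsilon_k}\to u\ge 0$, $u\ne 0$, and passing to the limit in $\mathcal{A}_{\varepsilon_k}u_{\varepsilon_k}^{m-1}=\rho(\mathcal{A}_{\varepsilon_k})u_{\varepsilon_k}^{[m-1]}$ to see that $\lim\rho(\mathcal{A}_{\varepsilon_k})$ is an eigenvalue of $\mathcal{A}$, hence $\le\rho(\mathcal{A})$. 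Your irreducible-case argument via a positive Perron vector is clean and correct.

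Worth noting for the paper itself: the only half of Corollary~\ref{corollary 2.1} actually invoked later is the $\inf$--$\max$ identity (in Lemma~\ref{lemma 2.5} and thereafter), which is true for all nonnegative tensors; the $\sup$--$\min$ identity, which is what needs irreducibility, is never used. So the paper's downstream results are unaffected, but the corollary should either carry an irreducibility hypothesis or drop its first equality.
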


From Corollary \ref{corollary 2.1}, we have the following result.

\begin{lemma} \label{lemma 2.5}
Let $\mathcal {A}=(a_{i_1\cdots,i_m})\in \mathbb{C}^{[m,n]}$ and $z$
be any complex number. And let $\mathcal
{B}(z)=(b_{i_1\cdots,i_m})\in \mathbb{R}^{[m,n]}$, where
\[b_{i\cdots i}=-|z-a_{i\cdots
i}|,~b_{ii_2\cdots,i_m}=|a_{ii_2\cdots,i_m}|~for ~ i\in N
~and~\delta_{ii_2\cdots,i_m}=0.\]  Then $\mathcal {B}(z)$ possesses
a real eigenvalue $v(z)$ which has the property that if $\lambda\in
\sigma(\mathcal {B}(z))$, then
\[Re(\lambda) \leq v(z).\] Furthermore, \begin{equation}\label{eq3.1}v(z)=\inf\limits_{x>0}\max\limits_{i\in N} \frac{(\mathcal {B}(z)x^{m-1})_i}{x_i^{m-1}}.
\end{equation}
\end{lemma}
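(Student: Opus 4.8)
The plan is to reduce to the nonnegative case by a diagonal shift. Fix $z\in\mathbb{C}$, pick a real number $s\geq\max_{i\in N}|z-a_{i\cdots i}|$, and let $\mathcal{C}=\mathcal{C}(z)=(c_{i_1\cdots i_m})\in\mathbb{R}^{[m,n]}$ be defined by $c_{i\cdots i}=b_{i\cdots i}+s=s-|z-a_{i\cdots i}|\geq 0$ and $c_{i_1\cdots i_m}=b_{i_1\cdots i_m}=|a_{i_1\cdots i_m}|\geq 0$ when $\delta_{i_1\cdots i_m}=0$. Then $\mathcal{C}\geq 0$, and componentwise $(\mathcal{C}x^{m-1})_i=(\mathcal{B}(z)x^{m-1})_i+sx_i^{m-1}$ for every $x$ and every $i\in N$.

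First I would record the spectral correspondence between $\mathcal{C}$ and $\mathcal{B}(z)$: for $x\neq 0$, the identity $\mathcal{B}(z)x^{m-1}=\lambda x^{[m-1]}$ holds if and only if $\mathcal{C}x^{m-1}=(\lambda+s)x^{[m-1]}$, so $\sigma(\mathcal{C})=\{\lambda+s:\lambda\in\sigma(\mathcal{B}(z))\}$ with matching eigenvectors. By Lemma~\ref{lemma 2.2}, $\rho(\mathcal{C})\in\sigma(\mathcal{C})$ (with a nonnegative eigenvector), hence $v(z):=\rho(\mathcal{C})-s$ is a \emph{real} eigenvalue of $\mathcal{B}(z)$. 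For any $\lambda\in\sigma(\mathcal{B}(z))$ we have $\lambda+s\in\sigma(\mathcal{C})$, so $|\lambda+s|\leq\rho(\mathcal{C})$ and therefore $\mathrm{Re}(\lambda)=\mathrm{Re}(\lambda+s)-s\leq|\lambda+s|-s\leq\rho(\mathcal{C})-s=v(z)$. This proves the first assertion; note in passing that it forces $v(z)=\max\{\mathrm{Re}(\lambda):\lambda\in\sigma(\mathcal{B}(z))\}$, so $v(z)$ does not depend on the particular $s$ chosen.

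For the variational formula~(\ref{eq3.1}) I would apply Corollary~\ref{corollary 2.1} to the nonnegative tensor $\mathcal{C}$, obtaining $\rho(\mathcal{C})=\inf_{x>0}\max_{i\in N}\frac{(\mathcal{C}x^{m-1})_i}{x_i^{m-1}}$. Substituting $\frac{(\mathcal{C}x^{m-1})_i}{x_i^{m-1}}=\frac{(\mathcal{B}(z)x^{m-1})_i}{x_i^{m-1}}+s$ and pulling the additive constant $s$ out of both $\max_{i\in N}$ and $\inf_{x>0}$ gives $\rho(\mathcal{C})=s+\inf_{x>0}\max_{i\in N}\frac{(\mathcal{B}(z)x^{m-1})_i}{x_i^{m-1}}$. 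Subtracting $s$ yields $v(z)=\inf_{x>0}\max_{i\in N}\frac{(\mathcal{B}(z)x^{m-1})_i}{x_i^{m-1}}$, as required.

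There is no serious obstacle here; this lemma is a technical stepping stone toward the boundary description of $\Gamma^R(\mathcal{A})$, and the shift reduces everything to facts already in hand. The only points to watch are that $s$ must be taken large enough that $\mathcal{C}$ is genuinely nonnegative (so Lemma~\ref{lemma 2.2} and Corollary~\ref{corollary 2.1} are applicable), and that the bound $\mathrm{Re}(\lambda)\leq v(z)$ is really nothing more than $\mathrm{Re}(w)\leq|w|\leq\rho(\mathcal{C})$ applied to $w=\lambda+s\in\sigma(\mathcal{C})$ and converted back by the shift.
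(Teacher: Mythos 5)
Your proof is correct and follows essentially the same route as the paper: a diagonal shift reducing $\mathcal{B}(z)$ to a nonnegative tensor $\mathcal{C}$, then Lemma~\ref{lemma 2.2} for existence of the real eigenvalue and Corollary~\ref{corollary 2.1} for the variational formula. The only cosmetic difference is that you allow any shift $s\geq\max_i|z-a_{i\cdots i}|$ whereas the paper fixes $s=\mu=\max_i|z-a_{i\cdots i}|$; as you observe, the result is independent of this choice.
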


\begin{proof} Let $\mu=\max\limits_{i\in N}|z-a_{i\cdots i}|$, and
$\mathcal {C}=(c_{i_1\cdots,i_m})\in \mathbb{R}^{[m,n]}$, where
\begin{equation} \label{eq 2.2} c_{i\cdots i}=\mu-|z-a_{i\cdots i}|~and~c_{ii_2\cdots,i_m}=|a_{ii_2\cdots,i_m}|, ~i\in N, ~\delta_{ii_2\cdots,i_m}=0.
\end{equation}
Then $B(z)=-\mu\mathcal {I}+\mathcal {C}$, where $\mathcal {C}\geq
0$. From Equality (\ref{eigvalue}), it is obvious that
$\lambda(\mathcal {C})\in \sigma (\mathcal {C})$ if and only if
$-\mu+\lambda(\mathcal {C})\in \sigma (\mathcal {B}(z))$. Moreover,
From Lemma \ref{lemma 2.2}, $\rho(\mathcal {C})$ is an eigenvalue of
$\mathcal {C}$ with a nonnegative eigenvector. Hence,
$-\mu+\rho(\mathcal {C})$ is an eigenvalue of  $\mathcal {B}(z)$
with a nonnegative eigenvector. Let
\begin{equation} \label{eq 2.3} v(z)=-\mu+\rho(\mathcal {C}).\end{equation} Obviously, $v(z)\in \sigma
(\mathcal{B}(z))$ is real. And for any $\lambda \in \sigma(\mathcal
{B}(z)),$ we have
\[Re(\lambda)=Re(\lambda+\mu-\mu)=Re(\lambda+\mu)-\mu\leq |\lambda+\mu|-\mu\leq \rho(\mathcal {C})-\mu=v(z).\]
Furthermore, by Corollary \ref{corollary 2.1}, we have
\[\rho(\mathcal {C})=\inf\limits_{x>0}
\max\limits_{i\in N} \frac{(\mathcal {C}x^{m-1})_i} {x^{m-1}_i}.\]
Then
\begin{eqnarray*}v(z)&=&-\mu+\rho(\mathcal {C})\\
&=&-\mu+\inf\limits_{x>0} \max\limits_{i\in N} \frac{(\mathcal
{C}x^{m-1})_i} {x^{m-1}_i}\\
&=&\inf\limits_{x>0} \max\limits_{i\in N} \frac{((-\mu\mathcal
{I}+\mathcal {C})x^{m-1})_i} {x^{m-1}_i}\\
&=&\inf\limits_{x>0} \max\limits_{i\in N} \frac{(\mathcal
{B}(z)x^{m-1})_i} {x^{m-1}_i}.\end{eqnarray*} The conclusion
follows.
\end{proof}

For $v(z)$ in Lemma \ref{lemma 2.5}, we give the following property.

\begin{proposition}\label{lemma 4.2} Let $\mathcal {A}=(a_{i_1\cdots i_m})\in \mathbb{C}^{[m,n]}$ and $v(z)$ be defined as Lemma \ref{lemma 2.5}. Then
$v(z)$ is uniformly continuous on $\mathbb{C}$.
\end{proposition}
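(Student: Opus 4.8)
The plan is to exploit the variational formula (\ref{eq3.1}) rather than the spectral-radius description $v(z)=-\mu+\rho(\mathcal{C})$, since the former exhibits $v(z)$ as an infimum of a family of functions that are Lipschitz in $z$ with one and the same constant. First I would unwind the quotient $(\mathcal{B}(z)x^{m-1})_i/x_i^{m-1}$: from the definition of $\mathcal{B}(z)$ in Lemma \ref{lemma 2.5} one gets, for any fixed $x>0$ and any $i\in N$,
\[
\frac{(\mathcal{B}(z)x^{m-1})_i}{x_i^{m-1}} = -|z-a_{i\cdots i}| + r^x_i(\mathcal{A}),
\]
so that (\ref{eq3.1}) can be rewritten as
\[
v(z) = \inf_{x>0}\ \max_{i\in N}\bigl(r^x_i(\mathcal{A}) - |z-a_{i\cdots i}|\bigr).
\]

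Next I would observe that for each fixed $x>0$ and each $i\in N$ the map $z\mapsto r^x_i(\mathcal{A}) - |z-a_{i\cdots i}|$ is $1$-Lipschitz on $\mathbb{C}$, because $r^x_i(\mathcal{A})$ does not depend on $z$ and $\bigl||z-a_{i\cdots i}|-|w-a_{i\cdots i}|\bigr|\le |z-w|$ by the reverse triangle inequality. Taking the maximum over the finite index set $N$ preserves this bound, so $g_x(z):=\max_{i\in N}(r^x_i(\mathcal{A})-|z-a_{i\cdots i}|)$ satisfies $|g_x(z)-g_x(w)|\le |z-w|$ with a constant that is \emph{independent of $x$}.

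Finally I would push the Lipschitz estimate through the infimum. Lemma \ref{lemma 2.5} guarantees that $v(z)\in\sigma(\mathcal{B}(z))$ is a finite real number for every $z\in\mathbb{C}$, so $v(z)=\inf_{x>0} g_x(z)$ is finite; then a standard $\varepsilon$-argument applies: given $\varepsilon>0$, choose $x>0$ with $g_x(w)\le v(w)+\varepsilon$, and use $v(z)\le g_x(z)\le g_x(w)+|z-w|\le v(w)+\varepsilon+|z-w|$; letting $\varepsilon\to0$ and then interchanging the roles of $z$ and $w$ gives $|v(z)-v(w)|\le |z-w|$. Hence $v$ is $1$-Lipschitz on $\mathbb{C}$, which is stronger than uniform continuity. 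The only delicate point is this last step — interchanging the Lipschitz bound with $\inf_{x>0}$ — and it is precisely here that the finiteness of $v$ (from Lemma \ref{lemma 2.5}) and the uniformity of the Lipschitz constant in the parameter $x$ are both needed; note that no compactness of the parameter set $\{x>0\}$ is required.
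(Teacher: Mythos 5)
Your proposal is correct and follows the same route as the paper: both start from the variational formula $v(z)=\inf_{x>0}\max_{i\in N}\{r^x_i(\mathcal{A})-|z-a_{i\cdots i}|\}$, both observe that each term in the max is $1$-Lipschitz in $z$ uniformly in $x$ (via the triangle inequality), and both pass the Lipschitz bound through the infimum to conclude $|v(z)-v(\tilde z)|\le|z-\tilde z|$. The only stylistic difference is that you pass through the infimum with an explicit $\varepsilon$-argument while the paper simply notes $\inf_{x>0}\bigl(|z-\tilde z|+g_x(z)\bigr)=|z-\tilde z|+\inf_{x>0}g_x(z)$; both are fine.
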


\begin{proof} Let $\mathcal {B}(z)\in
\mathbb{R}^{[m,n]}$ be defined as Lemma \ref{lemma 2.5}. Then,
similar to the proof of Theorem \ref{theorem3.1},

\[v(z)=\inf\limits_{x>0}\max\limits_{i\in N} \frac{(\mathcal
{B}(z)x^{m-1})_i}{x_i^{m-1}}=\inf\limits_{x>0}\max\limits_{i\in N}
\{r^{x}_i(\mathcal {A})-|z-a_{i\cdots i}|\}.\] Note that for any
$z,~\tilde{z}\in \mathbb{C}$ and $x>0,~x\in \mathbb{R}^n$,
\[\begin{array}{lll}
   \max\limits_{i\in N}\{r^{x}_i(\mathcal {A})-|z-a_{i\cdots i}|\}& = & \max\limits_{i\in N}\{r^{x}_i(\mathcal {A})-|z-\tilde{z}+\tilde{z}-a_{i\cdots i}|\} \\
   &\geq&\max\limits_{i\in N}\{r^{x}_i(\mathcal {A})-(|z-\tilde{z}|+|\tilde{z}-a_{i\cdots
   i}|)\}\\
   &=&\max\limits_{i\in N}\{r^{x}_i(\mathcal {A})-|\tilde{z}-a_{i\cdots
   i}|\}-|z-\tilde{z}|,
\end{array}\]
that is,
\[\max\limits_{i\in N}\{r^{x}_i(\mathcal {A})-|\tilde{z}-a_{i\cdots
   i}|\}\leq |z-\tilde{z}|+\max\limits_{i\in N}\{r^{x}_i(\mathcal {A})-|z-a_{i\cdots i}|\}.\]
Hence,
\[\begin{array}{lll}
\inf\limits_{x>0}\max\limits_{i\in N}\{r^{x}_i(\mathcal
{A})-|\tilde{z}-a_{i\cdots i}|\} &\leq& \inf\limits_{x>0}
\left\{|z-\tilde{z}|+\max\limits_{i\in N}\{r^{x}_i(\mathcal
{A})-|z-a_{i\cdots i}|\}\right\}\\
&=&|z-\tilde{z}|+\inf\limits_{x>0}\max\limits_{i\in
N}\{r^{x}_i(\mathcal {A})-|z-a_{i\cdots i}|\},
\end{array}\]
which implies \[v(\tilde{z})-v(z)\leq |z-\tilde{z}|.\] Similarly, we
can obtain
\[v(z)-v(\tilde{z})\leq |z-\tilde{z}|.\]
Therefore, \[|v(z)-v(\tilde{z})|\leq |z-\tilde{z}|.\] This implies
that $v(z)$ is uniformly continuous on $\mathbb{C}$.
\end{proof}

We now establish the sufficient and necessary condition for
$\Gamma^{R}(\mathcal {A})$.

\begin{theorem} \label{theorem3.1}
Let $\mathcal {A}, ~\mathcal {B}(z)$ and $v(z)$ be defined as Lemma
\ref{lemma 2.5}. Then  \begin{equation}\label{eq3.2}z\in
\Gamma^{R}(\mathcal {A})~if ~and ~only ~if ~v(z)\geq
0.\end{equation}
\end{theorem}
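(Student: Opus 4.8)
The plan is to turn both sides of the claimed equivalence into statements about the single quantity $\max_{i\in N}\{r^x_i(\mathcal{A})-|z-a_{i\cdots i}|\}$, so that the theorem reduces to the trivial fact that a function is nonnegative on $\{x>0\}$ exactly when its infimum there is nonnegative.

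First I would make the variational formula for $v(z)$ explicit in terms of the weighted row sums. Unravelling the definition of $\mathcal{B}(z)$ from Lemma~\ref{lemma 2.5}, for every $x>0$ and $i\in N$ one has
\[
\frac{(\mathcal{B}(z)x^{m-1})_i}{x_i^{m-1}}
= -\,|z-a_{i\cdots i}| + \sum_{\delta_{ii_2\cdots i_m}=0}\frac{|a_{ii_2\cdots i_m}|\,x_{i_2}\cdots x_{i_m}}{x_i^{m-1}}
= r^x_i(\mathcal{A}) - |z-a_{i\cdots i}|,
\]
since the only diagonal contribution to $(\mathcal{B}(z)x^{m-1})_i$ is $b_{i\cdots i}x_i^{m-1}=-|z-a_{i\cdots i}|x_i^{m-1}$. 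Substituting this into (\ref{eq3.1}) gives the working formula
\[
v(z)=\inf_{x>0}\ \max_{i\in N}\bigl\{\,r^x_i(\mathcal{A})-|z-a_{i\cdots i}|\,\bigr\},
\]
which is exactly the identity already invoked in the proof of Proposition~\ref{lemma 4.2}.

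Next I would unwind $\Gamma^R(\mathcal{A})$. For a fixed $x>0$, $z\in\Gamma^{r^x}(\mathcal{A})=\bigcup_{i\in N}\Gamma^{r^x}_i(\mathcal{A})$ holds iff $|z-a_{i\cdots i}|\le r^x_i(\mathcal{A})$ for at least one index $i$, i.e.\ iff $\max_{i\in N}\{r^x_i(\mathcal{A})-|z-a_{i\cdots i}|\}\ge 0$. Since $\Gamma^R(\mathcal{A})=\bigcap_{x>0}\Gamma^{r^x}(\mathcal{A})$, it follows that $z\in\Gamma^R(\mathcal{A})$ iff $\max_{i\in N}\{r^x_i(\mathcal{A})-|z-a_{i\cdots i}|\}\ge 0$ for \emph{every} $x>0$.

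Finally I would close the argument with the elementary equivalence, applied to $f(x):=\max_{i\in N}\{r^x_i(\mathcal{A})-|z-a_{i\cdots i}|\}$: ``$f(x)\ge 0$ for all $x>0$'' $\Longleftrightarrow$ ``$\inf_{x>0}f(x)\ge 0$'' (forward, because each value dominates the infimum; backward, because an infimum strictly below $0$ forces, by definition of infimum, some value strictly below $0$). Combining this with the formula $v(z)=\inf_{x>0}f(x)$ obtained in the first step yields $z\in\Gamma^R(\mathcal{A})\iff v(z)\ge 0$, as claimed. I do not expect a genuine obstacle here: all the substantive input — that $\mathcal{B}(z)$ carries a dominant real eigenvalue $v(z)$ and that it admits the Collatz--Wielandt representation (\ref{eq3.1}) — has already been packaged into Lemma~\ref{lemma 2.5} via the nonnegative-tensor Perron--Frobenius theory (Lemma~\ref{lemma 2.2}, Corollary~\ref{corollary 2.1}), so what remains is purely formal bookkeeping with unions and infima.
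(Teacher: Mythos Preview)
Your argument is correct and is essentially the paper's own proof, just reorganized: the paper also computes $\frac{(\mathcal{B}(z)x^{m-1})_i}{x_i^{m-1}}=r^x_i(\mathcal{A})-|z-a_{i\cdots i}|$, uses it together with (\ref{eq3.1}) to identify $v(z)$ with $\inf_{x>0}\max_i\{r^x_i(\mathcal{A})-|z-a_{i\cdots i}|\}$, and then runs the two implications separately rather than packaging them into the single ``$f\ge 0$ everywhere $\Leftrightarrow\inf f\ge 0$'' step. No new ideas are needed beyond what you wrote.
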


\begin{proof} Assume that $z\in \Gamma^{R}(\mathcal {A})$. From
Definition \ref{minimal ger}, we have that for each vector $x>0$,
$z\in \Gamma^{r^x}(\mathcal {A})$. Hence, there is an $i_0\in N$
such that \[|z-a_{i_0\cdots i_0}|\leq r^{x}_{i_0}(\mathcal {A}),\]
equivalently,
\[r^{x}_{i_0}(\mathcal {A})-|z-a_{i_0\cdots i_0}|\geq 0.\]
Note that for any $i\in N$, \[\frac{(\mathcal
{B}(z)x^{m-1})_i}{x_i^{m-1}}=r^{x}_i(\mathcal {A})-|z-a_{i\cdots
i}|.\] Then  $\frac{\left(\mathcal
{B}(z)x^{m-1}\right)_{i_0}}{x_{i_0}^{m-1}}\geq 0$, which implies
that for each vector $x>0$, \[\max\limits_{i\in N} \frac{(\mathcal
{B}(z)x^{m-1})_i}{x_i^{m-1}}\geq 0.\] By Lemma \ref{lemma 2.5},
\[v(z)=\inf\limits_{x>0}\max\limits_{i\in N} \frac{(\mathcal
{B}(z)x^{m-1})_i}{x_i^{m-1}}\geq 0.\]

Conversely, suppose that $v(z)\geq 0$. From Equality (\ref{eq3.1}),
then for each vector $x>0$, there is an $i\in N$ such that \[0\leq
v(z)\leq \frac{(\mathcal
{B}(z)x^{m-1})_i}{x_i^{m-1}}=r^{x}_i(\mathcal {A})-|z-a_{i\cdots
i}|.\]Hence, $|z-a_{i\cdots i}|\leq r^{x}_i(\mathcal {A})$, and then
$z\in \Gamma^{r^x}_i(\mathcal {A})$ which implies $z\in
\Gamma^{r^x}(\mathcal {A})$. Since this inclusion holds for each
vector $x>0$,  we have $z\in \Gamma^R(\mathcal {A})$ from Definition
\ref{minimal ger}.
\end{proof}

\section{Boundary of $\Gamma^{R}(\mathcal {A})$} In Section 2, a
sufficient and necessary condition for the elements belonging to
Minimal Ger\v{s}gorin tensor eigenvalue inclusion set
$\Gamma^{R}(\mathcal {A})$, is given. We in this section focus on
the boundary of $\Gamma^{R}(\mathcal {A})$, and establish
relationships between its boundary (see Definition \ref{boundary}),
the spectrum of the equimodular set for $\mathcal {A}$ (see
Definition \ref{equimodular}),  the spectrum of the extended
equimodular set for $\mathcal {A}$ (see Definition \ref{extended
equimodular}) and $\Gamma^{R}(\mathcal {A})$.

\begin{definition}\label{boundary}
Let $\mathcal{A} \in \mathbb{C}^{[m,n]}$. The boundary of Minimal
Ger\v{s}gorin tensor eigenvalue inclusion set $\Gamma^{R}(\mathcal
{A})$, denoted by $\partial \Gamma^{R}(\mathcal {A})$, is defined by
\[\partial\Gamma^{R}(\mathcal {A})=\overline{\Gamma^{R}(\mathcal {A})}\bigcap \overline{(\Gamma^{R}(\mathcal {A}))'},\]
where $(\Gamma^{R}(\mathcal {A}))'$ is the complement of
$\Gamma^{R}(\mathcal {A})$, and $\overline{\Gamma^{R}(\mathcal
{A})}$ is the closure of $\Gamma^{R}(\mathcal {A})$.
\end{definition}

\begin{definition}\label{equimodular}
Let $\mathcal{A}=(a_{i_1\cdots i_m}) \in \mathbb{C}^{[m,n]}$. The
equimodular set of $\mathcal{A}$, denoted by $\Omega(\mathcal{A})$,
is defined as \[\Omega(\mathcal{A})=\{\mathcal {Q}=(q_{i_1\cdots
i_m}) \in \mathbb{C}^{[m,n]}: q_{i\cdots i}= a_{i\cdots i},
|q_{ii_2\cdots i_m}|= |a_{ii_2\cdots i_m}|, ~i\in N,
\delta_{ii_2\cdots i_m}=0\}.\]
\end{definition}

\begin{definition}\label{extended equimodular}
Let $\mathcal{A}=(a_{i_1\cdots i_m}) \in \mathbb{C}^{[m,n]}$. The
extended equimodular set of $\mathcal{A}$, denoted by
$\hat{\Omega}(\mathcal{A})$, is defined as
\[\hat{\Omega}(\mathcal{A})=\{\mathcal {Q}=(q_{i_1\cdots i_m}) \in
\mathbb{C}^{[m,n]}: q_{i\cdots i}= a_{i\cdots i}, |q_{ii_2\cdots
i_m}|\leq |a_{ii_2\cdots i_m}|, i\in N, \delta_{ii_2\cdots
i_m}=0\}.\]
\end{definition}

A sufficient and necessary condition for the point lying on the
boundary of $\Gamma^{R}(\mathcal {A})$ is given as follows.

\begin{proposition}\label{boundary nec}
Let $\mathcal {A}$ and $v(z)$ be defined as Lemma \ref{lemma 2.5}.
 $z\in \partial \Gamma^{R}(\mathcal {A})$ if and only if $v(z)= 0$,
 and there is a sequence of $\{z_j\}_{j=1}^{\infty} \subseteq (\Gamma^{R}(\mathcal {A}))'$ (i.e., $v(z_j)<0$ for
 all $j\geq 1$) such that $\lim\limits_{j\rightarrow\infty} z_j=z$.
\end{proposition}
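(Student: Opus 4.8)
The plan is to prove both implications using the characterization from Theorem~\ref{theorem3.1} that $z\in\Gamma^R(\mathcal{A})$ iff $v(z)\geq 0$, together with the uniform continuity of $v$ established in Proposition~\ref{lemma 4.2}. The key observation is that, because $v$ is continuous, the set $\Gamma^R(\mathcal{A})=\{z:v(z)\geq 0\}$ is closed, so $\overline{\Gamma^R(\mathcal{A})}=\Gamma^R(\mathcal{A})=\{z:v(z)\geq 0\}$, and its complement is the open set $\{z:v(z)<0\}$.

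First I would handle the forward direction. Suppose $z\in\partial\Gamma^R(\mathcal{A})$. Then $z\in\overline{\Gamma^R(\mathcal{A})}=\Gamma^R(\mathcal{A})$, so $v(z)\geq 0$. Also $z\in\overline{(\Gamma^R(\mathcal{A}))'}$, so there is a sequence $\{z_j\}\subseteq(\Gamma^R(\mathcal{A}))'$ with $z_j\to z$; by Theorem~\ref{theorem3.1} this means $v(z_j)<0$ for all $j$. By continuity of $v$ (Proposition~\ref{lemma 4.2}), $v(z)=\lim_j v(z_j)\leq 0$. Combining with $v(z)\geq 0$ gives $v(z)=0$, and the sequence $\{z_j\}$ is exactly the one demanded. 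Conversely, suppose $v(z)=0$ and there is a sequence $\{z_j\}\subseteq(\Gamma^R(\mathcal{A}))'$ with $z_j\to z$. Since $v(z)=0\geq 0$, Theorem~\ref{theorem3.1} gives $z\in\Gamma^R(\mathcal{A})\subseteq\overline{\Gamma^R(\mathcal{A})}$. The existence of the sequence $z_j\to z$ with each $z_j$ in the complement gives $z\in\overline{(\Gamma^R(\mathcal{A}))'}$. Hence $z\in\partial\Gamma^R(\mathcal{A})$ by Definition~\ref{boundary}.

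I should also confirm the parenthetical claim in the statement that ``$z_j\in(\Gamma^R(\mathcal{A}))'$'' is equivalent to ``$v(z_j)<0$'': this is immediate from Theorem~\ref{theorem3.1}, since $z_j\notin\Gamma^R(\mathcal{A})$ iff it is \emph{not} the case that $v(z_j)\geq 0$, i.e.\ iff $v(z_j)<0$.

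The only genuinely delicate point is the use of continuity to pass the inequality $v(z_j)<0$ to the limit, where we only get $v(z)\leq 0$ rather than a strict inequality; this is why the strict condition $v(z_j)<0$ for the approximating sequence cannot be replaced by $v(z)<0$, and it is precisely what forces $v(z)=0$ when combined with $z\in\overline{\Gamma^R(\mathcal{A})}$. Everything else is a direct unwinding of Definition~\ref{boundary} together with the closedness of $\Gamma^R(\mathcal{A})$, which itself follows from Proposition~\ref{lemma 4.2}. No further machinery beyond Theorem~\ref{theorem3.1} and Proposition~\ref{lemma 4.2} is needed.
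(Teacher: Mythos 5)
Your proof is correct and follows essentially the same route as the paper: decompose $\partial\Gamma^R(\mathcal{A})=\Gamma^R(\mathcal{A})\cap\overline{(\Gamma^R(\mathcal{A}))'}$, apply the characterization $z\in\Gamma^R(\mathcal{A})\Leftrightarrow v(z)\geq 0$ from Theorem~\ref{theorem3.1}, and use continuity of $v$ to pass $v(z_j)<0$ to the limit. If anything you are slightly more explicit than the paper in invoking Proposition~\ref{lemma 4.2} for the closedness of $\Gamma^R(\mathcal{A})$ and for the limit argument, where the paper folds this into its auxiliary equivalence (\ref{eq4.1}).
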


\begin{proof} Note that $\Gamma^{R}(\mathcal {A})$ is a compact set in the complex plane.
Hence, from Definition \ref{boundary}, \[\partial\Gamma^{R}(\mathcal
{A})=\overline{\Gamma^{R}(\mathcal {A})}\bigcap
\overline{(\Gamma^{R}(\mathcal {A}))'}=\Gamma^{R}(\mathcal
{A})\bigcap \overline{(\Gamma^{R}(\mathcal {A}))'}.\] Furthermore,
by Theorem \ref{theorem3.1}, we have that
 \begin{equation}\label{eq4.1}z\in \overline{(\Gamma^{R}(\mathcal {A}))'}~if ~and ~only
~if ~v(z)\leq 0.\end{equation} Therefore, if $z\in \partial
\Gamma^{R}(\mathcal {A})$,  then, from (\ref{eq3.2}) and
(\ref{eq4.1}), $v(z)= 0$. Note that $(\Gamma^{R}(\mathcal {A}))'$ is
open and unbounded, then, by (\ref{eq4.1}) and $z\in
\overline{(\Gamma^{R}(\mathcal {A}))'}$, there is a sequence of
$\{z_j\}_{j=1}^{\infty} \subseteq (\Gamma^{R}(\mathcal {A}))' $ such
that $\lim\limits_{j\rightarrow\infty} z_j=z$, where $v(z_j)<0$ for
all $j\geq 1.$

Conversely, it is obvious by assumption that $z\in
\overline{(\Gamma^{R}(\mathcal {A}))'}$ and $z\in
\Gamma^{R}(\mathcal {A})=\overline{\Gamma^{R}(\mathcal {A})}$, that
is, \[z\in \partial \Gamma^{R}(\mathcal {A}).\] The proof is
completed.
\end{proof}

\begin{proposition}\label{pro 4.2}
For any $\mathcal {A}=(a_{i_1\cdots i_m})\in \mathbb{C}^{[m,n]}$ and
any $z\in \mathbb{C}$ with $v(z)=0$, there is a tensor $\mathcal
{Q}=(q_{i_1\cdots i_m})\in \Omega(\mathcal{A})$ for which $z$ is an
eigenvalue of $\mathcal {Q}$.  Then
\[ \partial
\Gamma^{R}(\mathcal {A})\subseteq \sigma
(\Omega(\mathcal{A}))\subseteq \sigma
(\hat{\Omega}(\mathcal{A}))\subseteq \Gamma^{R}(\mathcal {A}),
\]
where $\sigma (\Omega(\mathcal{A}))=\bigcup\limits_{\mathcal{D}\in
\Omega(\mathcal{A})} \sigma(\mathcal{D})$ and $\sigma
(\hat{\Omega}(\mathcal{A}))=\bigcup\limits_{\mathcal{D}\in
\hat{\Omega}(\mathcal{A})} \sigma(\mathcal{D})$.
\end{proposition}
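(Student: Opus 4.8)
The plan is to prove the chain of inclusions one link at a time, moving from the innermost claim outward. The first and main assertion --- that for every $z$ with $v(z)=0$ there exists $\mathcal{Q}\in\Omega(\mathcal{A})$ having $z$ as an eigenvalue --- is the heart of the proposition and I expect it to be the hardest step. Here is how I would approach it. By Lemma~\ref{lemma 2.5}, $v(z)=-\mu+\rho(\mathcal{C})$, where $\mathcal{C}\geq 0$ is the nonnegative tensor of \eqref{eq 2.2}; so $v(z)=0$ forces $\rho(\mathcal{C})=\mu=\max_{i\in N}|z-a_{i\cdots i}|$. By Lemma~\ref{lemma 2.2} (Perron--Frobenius), $\rho(\mathcal{C})$ is an eigenvalue of $\mathcal{C}$ with a \emph{nonnegative} eigenvector $u\neq 0$, hence $v(z)=-\mu+\rho(\mathcal{C})$ is an eigenvalue of $\mathcal{B}(z)$ with the same eigenvector $u$; since $v(z)=0$, we get $\mathcal{B}(z)u^{m-1}=0$, i.e. for every $i\in N$,
\[
-|z-a_{i\cdots i}|\,u_i^{m-1}+\sum_{\delta_{ii_2\cdots i_m}=0}|a_{ii_2\cdots i_m}|\,u_{i_2}\cdots u_{i_m}=0 .
\]
Now I would build $\mathcal{Q}\in\Omega(\mathcal{A})$ by choosing unimodular phase factors so that all the terms above line up in the same complex direction. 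Concretely: if all $u_i>0$, write $z-a_{i\cdots i}=|z-a_{i\cdots i}|e^{\mathrm{i}\theta_i}$ (picking any $\theta_i$ when $z=a_{i\cdots i}$), and set $q_{i\cdots i}=a_{i\cdots i}$, $q_{ii_2\cdots i_m}=|a_{ii_2\cdots i_m}|\,e^{\mathrm{i}(\theta_i)}$ (more precisely one adjusts the phases $\theta_i$ so that $(z-a_{i\cdots i})u_i^{m-1}=\sum q_{ii_2\cdots i_m}u_{i_2}\cdots u_{i_m}$, which is possible because the moduli already match by the displayed identity). Then $\mathcal{Q}\in\Omega(\mathcal{A})$ and $\mathcal{Q}u^{m-1}=z\,u^{[m-1]}$, so $z\in\sigma(\mathcal{Q})$. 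The main obstacle will be the case where $u$ has some zero entries: then the identity above still holds but the equation $\mathcal{Q}u^{m-1}=zu^{[m-1]}$ is automatically satisfied on the rows $i$ with $u_i=0$ regardless of the phases, while on rows with $u_i>0$ one must check that the sum $\sum|a_{ii_2\cdots i_m}|u_{i_2}\cdots u_{i_m}$ (which only sees products of positive $u$'s) can be rotated onto $(z-a_{i\cdots i})u_i^{m-1}$; the displayed modulus identity guarantees exactly this, so choosing $\arg q_{ii_2\cdots i_m}=\theta_i$ uniformly across $i_2,\dots,i_m$ works. I would write this out carefully as the substance of the proof.

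Once that existence claim is in hand, the first inclusion $\partial\Gamma^{R}(\mathcal{A})\subseteq\sigma(\Omega(\mathcal{A}))$ is immediate: by Proposition~\ref{boundary nec}, $z\in\partial\Gamma^{R}(\mathcal{A})$ implies $v(z)=0$, and then the existence claim gives $\mathcal{Q}\in\Omega(\mathcal{A})$ with $z\in\sigma(\mathcal{Q})\subseteq\sigma(\Omega(\mathcal{A}))$. The second inclusion $\sigma(\Omega(\mathcal{A}))\subseteq\sigma(\hat\Omega(\mathcal{A}))$ is trivial since $\Omega(\mathcal{A})\subseteq\hat\Omega(\mathcal{A})$ by comparing Definitions~\ref{equimodular} and~\ref{extended equimodular} (equality of moduli is a special case of $\leq$).

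For the last inclusion $\sigma(\hat\Omega(\mathcal{A}))\subseteq\Gamma^{R}(\mathcal{A})$, let $\mathcal{Q}\in\hat\Omega(\mathcal{A})$ and $\lambda\in\sigma(\mathcal{Q})$. Since $q_{i\cdots i}=a_{i\cdots i}$ and $|q_{ii_2\cdots i_m}|\leq|a_{ii_2\cdots i_m}|$, for every entrywise positive $x>0$ we have $r^x_i(\mathcal{Q})\leq r^x_i(\mathcal{A})$, so $\Gamma^{r^x}_i(\mathcal{Q})\subseteq\Gamma^{r^x}_i(\mathcal{A})$ and hence $\Gamma^{r^x}(\mathcal{Q})\subseteq\Gamma^{r^x}(\mathcal{A})$. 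By Theorem~\ref{th 2.1}, $\lambda\in\sigma(\mathcal{Q})\subseteq\Gamma^{r^x}(\mathcal{Q})\subseteq\Gamma^{r^x}(\mathcal{A})$; since $x>0$ was arbitrary, $\lambda\in\bigcap_{x>0}\Gamma^{r^x}(\mathcal{A})=\Gamma^{R}(\mathcal{A})$ by Definition~\ref{minimal ger}. This closes the chain. The only nontrivial ingredient is the phase-alignment construction in the first step; everything else is bookkeeping with the definitions and the already-established Theorems~\ref{th 2.1}, \ref{theorem3.1} and Proposition~\ref{boundary nec}.
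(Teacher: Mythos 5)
Your proposal is correct and follows essentially the same route as the paper: both take a nonnegative eigenvector $y$ of $\mathcal{B}(z)$ associated with the eigenvalue $v(z)=0$ (coming from the Perron--Frobenius eigenvector of the nonnegative tensor $\mathcal{C}$ of \eqref{eq 2.2}), read off the row-wise modulus identity $|z-a_{k\cdots k}|y_k^{m-1}=\sum_{\delta_{kk_2\cdots k_m}=0}|a_{kk_2\cdots k_m}|y_{k_2}\cdots y_{k_m}$, and align the phases row by row by setting $q_{kk_2\cdots k_m}=|a_{kk_2\cdots k_m}|e^{\mathbf{i}\psi_k}$ where $\psi_k=\arg(z-a_{k\cdots k})$, yielding $\mathcal{Q}y^{m-1}=zy^{[m-1]}$. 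Your concern about zero entries of the eigenvector is not actually an obstacle --- if $y_k=0$ the modulus identity forces the right-hand sum to vanish term-by-term, so the $k$-th row of $\mathcal{Q}y^{m-1}=zy^{[m-1]}$ holds trivially --- and the remaining inclusions are the same routine bookkeeping (Proposition~\ref{boundary nec}, $\Omega\subseteq\hat\Omega$, and $r_i^x(\mathcal{Q})\leq r_i^x(\mathcal{A})$ with Theorem~\ref{th 2.1}) that the paper invokes.
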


\begin{proof} From Definition \ref{minimal ger}, \ref{equimodular} and  \ref{extended equimodular}, it is obvious that
$\Omega(\mathcal{A})\subseteq \hat{\Omega}(\mathcal{A})$ and $\sigma
(\Omega(\mathcal{A}))\subseteq \sigma
(\hat{\Omega}(\mathcal{A}))\subseteq \Gamma^{R}(\mathcal {A})$.
Hence, we next only prove $\partial \Gamma^{R}(\mathcal
{A})\subseteq \sigma (\Omega(\mathcal{A}))$.

If $z\in \mathbb{C}$ is such that $v(z)=0$, then, from the proof of
Lemma \ref{lemma 2.5}, there is a vector $y=(y_1,\ldots, y_n)^T\geq
0$, $y\neq 0$ such that $\mathcal {B}(z)y=0,$ where $\mathcal
{B}(z)$ is defined as Lemma \ref{lemma 2.5}. This implies that for
any $k\in N$
\begin{equation}\label{eq4.2} |z-a_{k\cdots k}|y_k^{m-1} =\sum\limits_{k_2,\dots,k_m\in N,\atop \delta_{kk_2\cdots
k_m}=0} |a_{kk_2\cdots k_m}|y_{k_2}\cdots y_{k_m}.
\end{equation}
Now, let $\psi_k$ satisfy \[ z-a_{k\cdots k}=|z-a_{k\cdots
k}|e^{\emph{\textbf{i}}\psi_{k}},~ k\in N,
\]
and $\mathcal {Q}=(q_{i_1\cdots i_m})\in \mathbb{C}^{[m,n]}$, where
\[q_{k\cdots k}=a_{k\cdots k} ~and ~ q_{kk_2\cdots k_m}=|a_{kk_2\cdots k_m}|e^{\textbf{\emph{i}}\psi_{k}}, ~k\in N,~\delta_{kk_2\cdots
k_m}=0.\] Hence, from Definition \ref{equimodular}, $\mathcal {Q}\in
\Omega(\mathcal{A})$. Moreover, By considering the $k$-th entry
$(\mathcal{Q}y^{m-1})_k$ of $\mathcal{Q}y^{m-1}$, we have that for
any $k\in N$,
\begin{eqnarray*}(\mathcal{Q}y^{m-1})_k&=&\sum\limits_{k_2,\ldots,k_m\in
N}q_{kk_2\cdots k_m}y_{k_2}\cdots y_{k_m}\\
&=&q_{k\cdots k}y_{k}^{m-1}+ e^{\emph{\textbf{i}}\psi_k}\left(
\sum\limits_{k_2,\dots,k_m\in N,\atop \delta_{kk_2\cdots k_m}=0}
|a_{kk_2\cdots k_m}|y_{k_2}\cdots y_{k_m}\right)\\
&=&(z-(z-a_{k\cdots k}))y_{k}^{m-1}+
e^{\textbf{\emph{i}}\psi_k}\left( \sum\limits_{k_2,\dots,k_m\in
N,\atop \delta_{kk_2\cdots k_m}=0}
|a_{kk_2\cdots k_m}|y_{k_2}\cdots y_{k_m}\right)\\
&=&zy_{k}^{m-1}+ e^{\emph{\textbf{i}}\psi_k}\left( -|z-a_{k\cdots
k}|y_{k}^{m-1}+ \sum\limits_{k_2,\dots,k_m\in N,\atop
\delta_{kk_2\cdots k_m}=0}
|a_{kk_2\cdots k_m}|y_{k_2}\cdots y_{k_m}\right)\\
&=& zy_{k}^{m-1}, ~(By~ Equality~ (\ref{eq4.2}))
\end{eqnarray*}
that is,
\[\mathcal{Q}y^{m-1}=zy^{[m-1]}.\]
Note that $y\neq 0$, then $z$ is an eigenvalue of $\mathcal{Q}\in
\Omega(\mathcal{A})$, which shows that $v(z)=0$ implies $z\in
\sigma(\Omega(\mathcal{A}))$. From Proposition \ref{boundary nec},
we have that for each point $z\in \partial \Gamma^{R}(\mathcal
{A})$, $v(z)=0$, consequently, $z\in \sigma(\Omega(\mathcal{A}))$.
Hence, $\partial \Gamma^{R}(\mathcal {A})\subseteq \sigma
(\Omega(\mathcal{A})) $. The proof is completed. \end{proof}

For the sets $\sigma (\hat{\Omega}(\mathcal{A}))$ and
$\Gamma^{R}(\mathcal {A})$, we have the following result.

\begin{proposition}\label{pro 4.3}
Let $\mathcal {A}=(a_{i_1\cdots i_m})\in \mathbb{C}^{[m,n]}$. Then
\[\sigma (\hat{\Omega}(\mathcal{A}))= \Gamma^{R}(\mathcal {A}).\]
\end{proposition}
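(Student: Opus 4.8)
The plan is to combine Proposition~\ref{pro 4.2}, which already gives $\sigma(\hat{\Omega}(\mathcal{A}))\subseteq\Gamma^{R}(\mathcal{A})$, with the reverse inclusion $\Gamma^{R}(\mathcal{A})\subseteq\sigma(\hat{\Omega}(\mathcal{A}))$. So I would fix $z\in\Gamma^{R}(\mathcal{A})$; by Theorem~\ref{theorem3.1} this means $v(z)\ge 0$. When $v(z)=0$ the argument inside the proof of Proposition~\ref{pro 4.2} already produces a tensor $\mathcal{Q}\in\Omega(\mathcal{A})\subseteq\hat{\Omega}(\mathcal{A})$ with $z\in\sigma(\mathcal{Q})$, so that case is done; the remaining task is the ``interior'' case $v(z)>0$, where one must shrink the off-diagonal moduli so as to land back on the level set $v=0$.

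To do this I would interpolate. For $t\in[0,1]$ let $\mathcal{B}_{t}(z)=(b^{(t)}_{i_{1}\cdots i_{m}})\in\mathbb{R}^{[m,n]}$ be given by $b^{(t)}_{i\cdots i}=-|z-a_{i\cdots i}|$ and $b^{(t)}_{ii_{2}\cdots i_{m}}=t|a_{ii_{2}\cdots i_{m}}|$ for $\delta_{ii_{2}\cdots i_{m}}=0$, and write $\mathcal{B}_{t}(z)=-\mu\mathcal{I}+\mathcal{C}_{t}$ with $\mu=\max_{i\in N}|z-a_{i\cdots i}|$ and $\mathcal{C}_{t}\ge 0$, exactly as in Lemma~\ref{lemma 2.5}. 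Put $v_{t}(z)=-\mu+\rho(\mathcal{C}_{t})$; by Lemma~\ref{lemma 2.5} this is a real eigenvalue of $\mathcal{B}_{t}(z)$ admitting a nonnegative eigenvector, and it equals $\inf_{x>0}\max_{i\in N}\{t\,r^{x}_{i}(\mathcal{A})-|z-a_{i\cdots i}|\}$. At the endpoints $v_{1}(z)=v(z)>0$, while $\mathcal{C}_{0}$ is diagonal with nonnegative diagonal $\mu-|z-a_{i\cdots i}|$, so $v_{0}(z)=-\min_{i\in N}|z-a_{i\cdots i}|\le 0$.

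The crux is that $t\mapsto v_{t}(z)$ is continuous on $[0,1]$. For each fixed $x>0$, $t\mapsto\max_{i\in N}\{(\mu-|z-a_{i\cdots i}|)+t\,r^{x}_{i}(\mathcal{A})\}$ is a maximum of finitely many affine functions of $t$, hence continuous, so by Corollary~\ref{corollary 2.1} the infimum over $x>0$ makes $t\mapsto\rho(\mathcal{C}_{t})$ upper semicontinuous; dually, using the $\max_{x\ge 0,\,x\ne 0}\min_{x_{i}>0}$ formula of Lemma~\ref{lemma 2.4} (for fixed $x$ the index set $\{i:x_{i}>0\}$ is independent of $t$ and each quotient $(\mathcal{C}_{t}x^{m-1})_{i}/x_{i}^{m-1}$ is affine in $t$) it is a supremum of continuous functions, hence lower semicontinuous. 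Therefore $t\mapsto\rho(\mathcal{C}_{t})$, and with it $t\mapsto v_{t}(z)=\rho(\mathcal{C}_{t})-\mu$, is continuous, and since $v_{0}(z)\le 0\le v_{1}(z)$ the intermediate value theorem supplies some $t^{\ast}\in[0,1]$ with $v_{t^{\ast}}(z)=0$.

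It then remains to run the construction of Proposition~\ref{pro 4.2} on $\mathcal{B}_{t^{\ast}}(z)$: $v_{t^{\ast}}(z)=0$ gives $y\ge 0$, $y\ne 0$, with $\mathcal{B}_{t^{\ast}}(z)y^{m-1}=0$, i.e.\ $|z-a_{k\cdots k}|y_{k}^{m-1}=t^{\ast}\sum_{\delta_{kk_{2}\cdots k_{m}}=0}|a_{kk_{2}\cdots k_{m}}|y_{k_{2}}\cdots y_{k_{m}}$ for all $k\in N$; choosing $\psi_{k}$ with $z-a_{k\cdots k}=|z-a_{k\cdots k}|e^{\mathbf{i}\psi_{k}}$ and setting $q_{k\cdots k}=a_{k\cdots k}$, $q_{kk_{2}\cdots k_{m}}=t^{\ast}|a_{kk_{2}\cdots k_{m}}|e^{\mathbf{i}\psi_{k}}$ yields $|q_{kk_{2}\cdots k_{m}}|=t^{\ast}|a_{kk_{2}\cdots k_{m}}|\le|a_{kk_{2}\cdots k_{m}}|$, so $\mathcal{Q}=(q_{i_{1}\cdots i_{m}})\in\hat{\Omega}(\mathcal{A})$, and the same entrywise computation as in Proposition~\ref{pro 4.2} gives $\mathcal{Q}y^{m-1}=zy^{[m-1]}$. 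Hence $z\in\sigma(\mathcal{Q})\subseteq\sigma(\hat{\Omega}(\mathcal{A}))$, which finishes the reverse inclusion and thus the equality. The one genuinely new ingredient is the continuity of $t\mapsto v_{t}(z)$, obtained from the two-sided variational description of the Perron root; everything else recycles Lemma~\ref{lemma 2.5} and Proposition~\ref{pro 4.2}.
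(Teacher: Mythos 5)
Your proof is correct, but it takes a genuinely different route from the paper's. The paper handles the case $v(z)>0$ by reading off, from the eigenvector equation $\mathcal{B}(z)y^{m-1}=v(z)y^{[m-1]}$, a \emph{row-dependent} shrinking factor
\[
\mu_k=\frac{|z-a_{k\cdots k}|\,y_k^{m-1}}{\bigl(|z-a_{k\cdots k}|+v(z)\bigr)y_k^{m-1}}\in[0,1]
\]
(with $\mu_k=1$ when the denominator vanishes), sets $q_{kk_2\cdots k_m}=\mu_k a_{kk_2\cdots k_m}$, and checks that the resulting $\mathcal{Q}\in\hat{\Omega}(\mathcal{A})$ satisfies an equality of the same form as (\ref{eq4.2}); it then invokes Proposition~\ref{pro 4.2} on $\mathcal{Q}$ to land inside $\Omega(\mathcal{Q})\subseteq\hat{\Omega}(\mathcal{A})$. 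This is entirely algebraic, with no topological tool: the $\mu_k$ are handed to you by the eigenvector itself, and no continuity argument is needed. You instead use a single \emph{global} shrinking parameter $t$, prove $t\mapsto v_t(z)$ is continuous by combining the two variational formulas for the spectral radius (one gives upper, the other lower semicontinuity), and apply the intermediate value theorem to locate $t^{\ast}$ with $v_{t^{\ast}}(z)=0$. Both arguments are sound. The paper's is shorter and self-contained; yours costs an extra continuity lemma but is conceptually transparent (a homotopy argument: shrink off-diagonals until the level set $\{v=0\}$ is hit) and also produces the phased tensor $\mathcal{Q}\in\hat{\Omega}(\mathcal{A})$ in one step rather than passing through an intermediate real-scaled tensor and then reinvoking Proposition~\ref{pro 4.2}. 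One small caveat to make explicit: you apply Lemma~\ref{lemma 2.5} to $\mathcal{B}_t(z)$, whose off-diagonal entries are $t|a_{ii_2\cdots i_m}|$ rather than $|a_{ii_2\cdots i_m}|$; this is legitimate since the lemma's proof only uses nonnegativity of the off-diagonal part, but it is worth saying so.
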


\begin{proof} From Proposition \ref{pro 4.2}, we have $\sigma (\hat{\Omega}(\mathcal{A}))\subseteq \Gamma^{R}(\mathcal
{A})$. Hence, we only prove $\Gamma^{R}(\mathcal {A})\subseteq
\sigma (\hat{\Omega}(\mathcal{A}))$.

Let $z\in \Gamma^{R}(\mathcal {A})$. Then, from Theorem
\ref{theorem3.1}, $v(z)\geq 0$. And from the proof of Lemma
\ref{lemma 2.5}, there is a vector $y=(y_1,\ldots, y_n)^T\geq 0$,
$y\neq 0$ such that \[\mathcal {B}(z)y^{m-1}=v(z)y^{[m-1]},\] where
$\mathcal {B}(z)$ is defined as Lemma \ref{lemma 2.5}. Hence for any
$k\in N$, \begin{equation}\label{eq4.3} (|z-a_{k\cdots
k}|+v(z))y_k^{m-1} =\sum\limits_{k_2,\dots,k_m\in N,\atop
\delta_{kk_2\cdots k_m}=0} |a_{kk_2\cdots k_m}|y_{k_2}\cdots y_{k_m}
\end{equation}
Now, let $\mathcal {Q}=(q_{i_1\cdots i_m})\in \mathbb{C}^{[m,n]}$,
with
\[q_{k\cdots k}=a_{k\cdots k} ~and ~ q_{kk_2\cdots k_m}=\mu_k a_{kk_2\cdots k_m}, ~k\in N,~\delta_{kk_2\cdots
k_m}=0,\] where
\[\mu_k=\left\{\begin{array}{cc}
   \frac{\left( \sum\limits_{k_2,\dots,k_m\in N,\atop
\delta_{kk_2\cdots k_m}=0} |a_{kk_2\cdots k_m}|y_{k_2}\cdots
y_{k_m}\right)-v(z)y_k^{m-1}}{\sum\limits_{k_2,\dots,k_m\in N,\atop
\delta_{kk_2\cdots k_m}=0} |a_{kk_2\cdots k_m}|y_{k_2}\cdots
y_{k_m}},   & if ~\sum\limits_{k_2,\dots,k_m\in N,\atop
\delta_{kk_2\cdots k_m}=0} |a_{kk_2\cdots k_m}|y_{k_2}\cdots y_{k_m}>0,  \\
   1,   &if ~\sum\limits_{k_2,\dots,k_m\in N,\atop
\delta_{kk_2\cdots k_m}=0} |a_{kk_2\cdots k_m}|y_{k_2}\cdots
y_{k_m}=0.
\end{array}
\right.\] Furthermore, from Equality (\ref{eq4.3}) and the fact that
both $|z-a_{k\cdots k}|y_k^{m-1}\geq 0$ and $v(z)y_k^{m-1}\geq 0$
hold for any $k\in N$, we easily obtain $0\leq \mu_k \leq 1$ for any
$k\in N$. Hence, \[\mathcal {Q}\in \hat{\Omega}(\mathcal{A}).\] For
the tensor $\mathcal {Q}$, we have from Equality (\ref{eq4.3}) that
for any $k\in N$,
\begin{eqnarray*}|z-q_{k\cdots k}|y_k^{m-1}&=&|z-a_{k\cdots
k}|y_k^{m-1}\\
&=&\left(\sum\limits_{k_2,\dots,k_m\in N,\atop \delta_{kk_2\cdots
k_m}=0} |a_{kk_2\cdots k_m}|y_{k_2}\cdots
y_{k_m}\right)-v(z)y_k^{m-1}\\
&=&\mu_k\left(\sum\limits_{k_2,\dots,k_m\in N,\atop
\delta_{kk_2\cdots k_m}=0} |a_{kk_2\cdots k_m}|y_{k_2}\cdots
y_{k_m}\right)\\&=&\sum\limits_{k_2,\dots,k_m\in N,\atop
\delta_{kk_2\cdots k_m}=0} |q_{kk_2\cdots k_m}|y_{k_2}\cdots
y_{k_m},
\end{eqnarray*}
i.e.,
\[|z-q_{k\cdots k}|y_k^{m-1}=\sum\limits_{k_2,\dots,k_m\in N,\atop
\delta_{kk_2\cdots k_m}=0} |q_{kk_2\cdots k_m}|y_{k_2}\cdots y_{k_m}
~for ~any~k\in N.\] Now, the above expression is exactly of the form
of Equality (\ref{eq4.2}) in the proof of Proposition \ref{pro 4.2}.
Hence, similar to the proof of Proposition \ref{pro 4.2}, we have
that there is a tensor $\mathcal {P} \in \Omega(\mathcal{Q})$ such
that $z\in \sigma (\mathcal {P})$. Note that $ \mathcal {P}\in
\Omega(\mathcal{Q})$ and $\mathcal{Q}\in \hat{\Omega}(\mathcal{A})$.
Therefore, $\mathcal {P}\in \hat{\Omega}(\mathcal{A})$,
consequently, $z\in \sigma ( \hat{\Omega}(\mathcal{A}))$ and
$\Gamma^{R}(\mathcal {A})\subseteq \sigma
(\hat{\Omega}(\mathcal{A}))$. The proof is completed.
\end{proof}

From Propositions \ref{pro 4.2} and \ref{pro 4.3}, we can obtain the
following relationships.

\begin{theorem}\label{theorem 4.1}
Let $\mathcal {A}\in \mathbb{C}^{[m,n]}$. Then
\[\partial
\Gamma^{R}(\mathcal {A})\subseteq \sigma
(\Omega(\mathcal{A}))\subseteq \Gamma^{R}(\mathcal {A}).\]
\end{theorem}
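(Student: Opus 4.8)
The plan is to read the two required inclusions straight off the results already established in this section, since Theorem \ref{theorem 4.1} is essentially a distillation of Propositions \ref{pro 4.2} and \ref{pro 4.3}. First I would invoke Proposition \ref{pro 4.2}, which already delivers the full chain
\[\partial\Gamma^{R}(\mathcal{A})\subseteq\sigma(\Omega(\mathcal{A}))\subseteq\sigma(\hat{\Omega}(\mathcal{A}))\subseteq\Gamma^{R}(\mathcal{A}).\]
Dropping the middle term $\sigma(\hat{\Omega}(\mathcal{A}))$ immediately yields both $\partial\Gamma^{R}(\mathcal{A})\subseteq\sigma(\Omega(\mathcal{A}))$ and $\sigma(\Omega(\mathcal{A}))\subseteq\Gamma^{R}(\mathcal{A})$, which is precisely the assertion.

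Alternatively, and presumably this is why Proposition \ref{pro 4.3} is cited alongside, one can replace the last inclusion in the chain by the sharper equality $\sigma(\hat{\Omega}(\mathcal{A}))=\Gamma^{R}(\mathcal{A})$ from Proposition \ref{pro 4.3}. Composing the obvious inclusion $\sigma(\Omega(\mathcal{A}))\subseteq\sigma(\hat{\Omega}(\mathcal{A}))$ (which follows from $\Omega(\mathcal{A})\subseteq\hat{\Omega}(\mathcal{A})$, immediate from Definitions \ref{equimodular} and \ref{extended equimodular}) with that equality gives $\sigma(\Omega(\mathcal{A}))\subseteq\Gamma^{R}(\mathcal{A})$ once more, while the inclusion $\partial\Gamma^{R}(\mathcal{A})\subseteq\sigma(\Omega(\mathcal{A}))$ carries over unchanged from Proposition \ref{pro 4.2}. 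Either route takes one line once the earlier statements are in hand.

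There is no genuine obstacle at this stage: all the analytic substance has already been done. The characterization $z\in\partial\Gamma^{R}(\mathcal{A})\iff v(z)=0$ (together with approachability by points where $v<0$) comes from Proposition \ref{boundary nec}; the construction, for $v(z)=0$, of an equimodular tensor $\mathcal{Q}\in\Omega(\mathcal{A})$ with $z\in\sigma(\mathcal{Q})$ via the phases $\psi_k$ is carried out in Proposition \ref{pro 4.2}; and the scaling argument producing, for $v(z)\ge 0$, a tensor in $\hat{\Omega}(\mathcal{A})$ with eigenvalue $z$ is done in Proposition \ref{pro 4.3}. The remaining task here is purely formal — verifying that the quoted inclusions compose correctly — so I would keep the proof to two or three sentences and simply cite Propositions \ref{pro 4.2} and \ref{pro 4.3}.
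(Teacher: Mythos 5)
Your proof is correct and matches the paper's approach exactly: Theorem \ref{theorem 4.1} is just a restatement of the chain already established in Proposition \ref{pro 4.2} (optionally tightened by Proposition \ref{pro 4.3}), and the paper likewise derives it immediately from those two propositions with no further argument.
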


\begin{remark} From Proposition \ref{pro 4.3}, we known that $\sigma
(\hat{\Omega}(\mathcal{A}))$ completely fills out $\Gamma^R(\mathcal
{A})$, that is,  $\sigma (\hat{\Omega}(\mathcal{A}))=
\Gamma^{R}(\mathcal {A})$. And from Theorem \ref{theorem 4.1}, we
known that if $\sigma (\Omega(\mathcal{A}))$ is a proper subset of
$\Gamma^{R}(\mathcal {A})$ with $\sigma (\Omega(\mathcal{A}))\neq
\partial \Gamma^{R}(\mathcal {A})$, then $\sigma
(\Omega(\mathcal{A}))$ necessarily have internal boundaries in
$\Gamma^{R}(\mathcal {A})$.
\end{remark}

\section{A numerical approximation of Minimal Ger\v{s}gorin tensor eigenvalue
inclusion set}
Unlike Ger\v{s}gorin tensor eigenvalue inclusion set
$\Gamma(\mathcal {A})$, or $\Gamma^{r^x}(\mathcal {A})$ of
(\ref{equation2.1}), Minimal Ger\v{s}gorin tensor eigenvalue
inclusion set $\Gamma^{R}(\mathcal {A})$ of a complex tensor
$\mathcal {A}$ is not easy to determine numerically generally. In
this section, for an irreducible tensor $\mathcal {A}$, we give a
numerical approximation of $\Gamma^{R}(\mathcal {A})$, which
contains $\Gamma^{R}(\mathcal {A})$. We now give a lemma, which will
be used below.

\begin{lemma} \label{lemma 4.1} Let $\mathcal {A}=(a_{i_1\cdots i_m})\in \mathbb{C}^{[m,n]}$ be
irreducible, and $v(z)$ be defined as Lemma \ref{lemma 2.5}. Then
for each $i\in N$,
\[v(a_{i\cdots i})>0.\]
Furthermore, for each $a_{i\cdots i}$ and for each real $\theta$
with $0\leq \theta < 2\pi$, let $\tilde{\gamma}_i(\theta)>0$ be the
smallest $\gamma>0$ for which
\begin{equation}\label{eq5.0}\left\{\begin{array}{l}
   v(a_{i\cdots i}+\tilde{\gamma}_i(\theta)e^{\textbf{i}\theta})=0,   \\
~ and~there ~is~ a~ sequence~ of ~\{z_j\}_{j=1}^\infty ~with\\
 \lim\limits_{j\rightarrow\infty} z_j=a_{i\cdots
i}+\tilde{\gamma}_i(\theta)e^{\textbf{i}\theta}~ such ~that~
v(z_j)<0~ for~ all~ j\geq 1.
\end{array}
\right.\end{equation}
 Then, the complex interval $[ a_{i\cdots i}+te^{\textbf{i}\theta}]$, for $0\leq t\leq \tilde{\gamma}_i(\theta) $, is
contained in $\Gamma^{R}(\mathcal {A})$ for each real $\theta\in
[0,2\pi)$, that is, the set
\[\bigcup\limits_{\theta~\in [0,2\pi) } [ a_{i\cdots i}+te^{\textbf{i}\theta}]_{t=0}^{\tilde{\gamma}_i(\theta)},~i\in N,\] is a subset of
$\Gamma^{R}(\mathcal {A})$.
\end{lemma}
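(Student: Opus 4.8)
The plan is to establish the two assertions of the lemma in turn: first the strict positivity $v(a_{i\cdots i})>0$, then the claim that the radial segments of length $\tilde\gamma_i(\theta)$ lie in $\Gamma^R(\mathcal A)$. For the first part I would evaluate $v$ at the diagonal entry $z=a_{i\cdots i}$. At this point the $i$-th ``diagonal deficiency'' $|z-a_{i\cdots i}|=0$, so the tensor $\mathcal B(a_{i\cdots i})$ has $b_{i\cdots i}=0$ while all other $b_{j\cdots j}=-|a_{i\cdots i}-a_{j\cdots j}|\le 0$ and the off-diagonal entries are $|a_{jj_2\cdots j_m}|\ge 0$, with at least one of them positive in the $i$-th slice (here irreducibility is what guarantees the $i$-th row of $\mathcal A$ is not all zero off the diagonal; if $r_i(\mathcal A)=0$ then $\{i\}$ would be a reducing set unless $n=1$). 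Using the characterization $v(a_{i\cdots i})=\inf_{x>0}\max_{j\in N}\{r^x_j(\mathcal A)-|a_{i\cdots i}-a_{j\cdots j}|\}$ from Lemma \ref{lemma 2.5} (cf.\ the form used in Proposition \ref{lemma 4.2}), I would exhibit a single positive vector $x$ making this max strictly positive: take $x_i$ large relative to the other coordinates so that $r^x_i(\mathcal A)=\sum_{\delta_{ii_2\cdots i_m}=0}|a_{ii_2\cdots i_m}|x_{i_2}\cdots x_{i_m}/x_i^{m-1}$ stays bounded below by a positive constant while the subtracted term $|a_{i\cdots i}-a_{i\cdots i}|=0$; more carefully, since $v$ is itself an eigenvalue of $\mathcal B(a_{i\cdots i})$ with a nonnegative eigenvector and $\mathcal B(a_{i\cdots i})+\mu\mathcal I=\mathcal C\ge 0$ is irreducible (same zero pattern as $\mathcal A$), Lemma \ref{lemma 2.2} gives $\rho(\mathcal C)>0$ and, comparing the $i$-th equations, one sees $v(a_{i\cdots i})=\rho(\mathcal C)-\mu>0$ because the strict positivity of the Perron value forces $v(a_{i\cdots i})\ge 0$ and equality $v=0$ would entail, via the eigenvector equation $\mathcal B(a_{i\cdots i})y^{m-1}=0$ and the $i$-th row, that the whole off-diagonal contribution vanishes, contradicting irreducibility.

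For the second part, fix $i\in N$ and $\theta\in[0,2\pi)$, and write $w_t=a_{i\cdots i}+t e^{\mathbf i\theta}$ for $0\le t\le\tilde\gamma_i(\theta)$. I want to show every such $w_t\in\Gamma^R(\mathcal A)$, i.e.\ by Theorem \ref{theorem3.1} that $v(w_t)\ge 0$. At $t=0$ this is the strict inequality just proved, and at $t=\tilde\gamma_i(\theta)$ we have $v(w_{\tilde\gamma_i(\theta)})=0$ by the defining property (\ref{eq5.0}). For the intermediate values I would use the uniform continuity of $v$ on $\mathbb C$ (Proposition \ref{lemma 4.2}): $t\mapsto v(w_t)$ is a continuous real-valued function on $[0,\tilde\gamma_i(\theta)]$ with $v(w_0)>0$ and $v(w_{\tilde\gamma_i(\theta)})=0$. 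The point is that it cannot dip below $0$ before reaching $t=\tilde\gamma_i(\theta)$: if $v(w_{t_0})<0$ for some $t_0<\tilde\gamma_i(\theta)$, then by the intermediate value theorem there is $t_1\in(0,t_0)$ with $v(w_{t_1})=0$, and since $v(w_t)<0$ on a right-neighbourhood approaching $t_0$ — more precisely one can find points $z_j\to w_{t_1}$ with $v(z_j)<0$ (take $z_j=w_{s_j}$ with $s_j\downarrow t_1$ chosen where $v$ is negative, using that $v(w_{t_1}^+)$ eventually goes negative on the way to $t_0$) — the pair $w_{t_1}$, $\{z_j\}$ satisfies the two conditions in (\ref{eq5.0}) at the smaller radius $t_1<\tilde\gamma_i(\theta)$, contradicting minimality of $\tilde\gamma_i(\theta)$. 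Hence $v(w_t)\ge 0$ for all $t\in[0,\tilde\gamma_i(\theta)]$, so the whole segment lies in $\Gamma^R(\mathcal A)$, and taking the union over $\theta\in[0,2\pi)$ and over $i\in N$ finishes the proof.

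The main obstacle I anticipate is the intermediate-value argument in the second part: one must rule out that $v$ becomes negative and then returns to zero \emph{strictly inside} the interval $[0,\tilde\gamma_i(\theta))$, and the cleanest way is exactly to show that any such interior zero would, together with a sequence of nearby negative points, meet the defining conditions (\ref{eq5.0}) and thereby contradict the minimality of $\tilde\gamma_i(\theta)$; care is needed to produce the approximating sequence $\{z_j\}$ with $v(z_j)<0$ purely from the sign behaviour of the continuous function $t\mapsto v(w_t)$, which is where continuity of $v$ (Proposition \ref{lemma 4.2}) does the work. The positivity $v(a_{i\cdots i})>0$ is where irreducibility is genuinely used; the rest is topology once that strict inequality and Theorem \ref{theorem3.1} are in hand.
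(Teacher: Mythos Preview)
Your overall strategy matches the paper's: use irreducibility and the Perron--Frobenius eigenvector to get $v(a_{i\cdots i})>0$, then use continuity of $v$ and the minimality of $\tilde\gamma_i(\theta)$ to trap the segment inside $\Gamma^R(\mathcal A)$.

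Two remarks on execution. In the first part, your opening gambit --- ``exhibit a single positive vector $x$ making this max strictly positive'' --- is aimed the wrong way: $v(z)$ is an \emph{infimum} over $x>0$, so producing one good $x$ proves nothing. You then self-correct to the eigenvector argument, but the sentence ``the strict positivity of the Perron value forces $v(a_{i\cdots i})\ge 0$'' is not a valid step (from $\rho(\mathcal C)>0$ alone you cannot conclude $\rho(\mathcal C)\ge\mu$). The paper's route is cleaner and is essentially what you do at the very end: irreducibility of $\mathcal A$ makes $\mathcal C$ irreducible, so by Lemma~\ref{lemma 2.2} the Perron eigenvector $y$ is \emph{strictly positive}; at this $y$ the eigenvector equation gives $v(a_{i\cdots i})=\dfrac{(\mathcal B(a_{i\cdots i})y^{m-1})_i}{y_i^{m-1}}=r_i^y(\mathcal A)-0=r_i^y(\mathcal A)$, which is $>0$ because $y>0$ and row $i$ has a nonzero off-diagonal entry. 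No contradiction argument is needed.

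For the second part your treatment is actually more detailed than the paper's, which simply observes that $t\mapsto v(a_{i\cdots i}+te^{\mathbf i\theta})$ is positive at $t=0$, continuous, and negative far out, so a smallest $\tilde\gamma_i(\theta)$ satisfying (\ref{eq5.0}) exists and (via Proposition~\ref{boundary nec}) the endpoint lies on $\partial\Gamma^R(\mathcal A)$; the inclusion of the segment is then asserted. Your intermediate-value contradiction fills that gap correctly: if $v(w_{t_0})<0$ for some $t_0<\tilde\gamma_i(\theta)$, take $t_1=\inf\{t\in(0,t_0]:v(w_t)<0\}$; continuity gives $v(w_{t_1})=0$ and a sequence $s_j\downarrow t_1$ with $v(w_{s_j})<0$, so $t_1$ already satisfies (\ref{eq5.0}), contradicting minimality.
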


\begin{proof} Let $\mathcal {B}(z)=(b_{i_1\cdots i_m})\in
\mathbb{R}^{[m,n]}$ be defined as Lemma \ref{lemma 2.5}, where,
\[b_{i\cdots i}=-|z-a_{i\cdots
i}|~and~b_{ii_2\cdots,i_m}=|a_{ii_2\cdots,i_m}|, ~ i\in N,
~\delta_{ii_2\cdots,i_m}=0.\] And let $\mathcal
{C}=(c_{i_1\cdots,i_m})\in \mathbb{R}^{[m,n]}$, where
\[c_{i\cdots i}=\mu-|z-a_{i\cdots i}|~and~c_{ii_2\cdots,i_m}=|a_{ii_2\cdots,i_m}|, ~i\in N, ~\delta_{ii_2\cdots,i_m}=0,\]
$\mu=\max\limits_{i\in N}|z-a_{i\cdots i}|$. Then $B(z)=-\mu\mathcal
{I}+\mathcal {C}$. Since $\mathcal {A}$ is irreducible, $\mathcal
{B}(z)$, also $\mathcal {C}$, is irreducible. Hence, $\mathcal {C}$
is an irreducible nonnegative tensor. From Lemma \ref{lemma 2.2},
there is a positive eigenvector of $\mathcal {C}$ corresponding to
$\rho (\mathcal {C})$. Therefore, similar to the proof of \ref{lemma
2.5}, there is a positive eigenvector of $\mathcal {B}(z)$
corresponding to $v(z)$. Moreover, by Equality (\ref{eq3.1}), that
is,
\[v(z)=\inf\limits_{x>0}\max\limits_{i\in N} \frac{(\mathcal
{B}(z)x^{m-1})_i}{x_i^{m-1}},\] we get that for any $z$, there
exists a positive vector $y$ such that for all $j\in N$,
\begin{equation} \label{eq5.1} v(z)=\frac{(\mathcal
{B}(z)y^{m-1})_j}{y_j^{m-1}}.\end{equation} Then for any $i\in N$,
take $z=a_{i\cdots i}$ and let $x=(x_1,\ldots,x_n)^T>0$ be such that
for any $j\in N$, $v(a_{i\cdots i})=\frac{(\mathcal
{B}(z)x^{m-1})_j}{x_j^{m-1}}$. In particular,
\[v(a_{i\cdots i})=\frac{(\mathcal {B}(z)x^{m-1})_i}{x_i^{m-1}}=r^{x}_i(\mathcal {A})-|a_{i\cdots i}-a_{i\cdots i}|=r^{x}_i(\mathcal {A}).\]
Since $\mathcal {A}$ is irreducible, we have $r^{x}_i(\mathcal
{A})>0$, consequently, $v(a_{i\cdots i})>0$.

Furthermore, for each $\theta$ with $0\leq \theta< 2\pi$,
$a_{i\ldots i}+te^{\emph{\textbf{i}}\theta}$, for all $t\geq 0$, is
the semi-infinite complex line. Obviously, the function $
v(a_{i\ldots i}+te^{\emph{\textbf{i}}\theta})$ is positive at $t=0$,
is continuous on this line, and is negative outside
$\Gamma^R(\mathcal {A})$ from Theorem \ref{theorem3.1}. Hence, there
is a smallest $\tilde{\gamma}_i(\theta)>0$ satisfying (\ref{eq5.0}).
And by Proposition \ref{boundary nec}, we get that $a_{i\ldots
i}+\tilde{\gamma}_i(\theta)e^{\textbf{\emph{i}}\theta} \in \partial
\Gamma^R(\mathcal {A}).$ This implies that for each real $\theta$,
[$a_{i\ldots i}$, $a_{i\ldots
i}+\tilde{\gamma}_i(\theta)e^{\textbf{\emph{i}}\theta}$] is a subset
of $\Gamma^R(\mathcal {A})$. The conclusion follows.
\end{proof}

We now give the following procedure for approximating Minimal
Ger\v{s}gorin tensor eigenvalue inclusion set $\Gamma^R(\mathcal
{A})$ for an irreducible tensor $\mathcal {A}=(a_{i_1\cdots,i_m})\in
\mathbb{C}^{[m,n]}$.

\textbf{Procedure of numerical approximation }

Step 1. determine the positive numbers $\{ v(a_{j\cdots j})\}_{j\in
N}$;

Step 2. determine the largest $\tilde{\gamma}_j(\theta)$ for $0\leq
\theta< 2\pi$, such that $a_{j\cdots
j}+\tilde{\gamma}_j(\theta)e^{\emph{\textbf{i}}\theta} \in \partial
\Gamma^R(\mathcal {A})$, i.e.,
\begin{equation}\label{eq4.4} v(a_{j\cdots
j}+\tilde{\gamma}_j(\theta)e^{\emph{\textbf{i}}\theta})=0, ~with~
v(a_{j\cdots
j}+(\tilde{\gamma}_j(\theta)+\varepsilon)e^{\emph{\textbf{i}}\theta})<
0
\end{equation}
for all sufficiently small $\varepsilon>0$;

Step 3. take the $m$ points $w_{k_{j_\theta}}=a_{j\cdots
j}+\tilde{\gamma}_j(\theta)e^{\textbf{\emph{i}}\theta} \in
\partial \Gamma^R(\mathcal {A})$ for $k_{j_\theta}=1,2,\ldots, m$, and detemine the set $ \bigcap\limits_{k_{j_\theta}=1}^m
\Gamma^{w_{k_{j_\theta}}}(\mathcal {A})$ approximating to
$\Gamma^R(\mathcal {A})$, where
\[\Gamma^{w_{k_{j_\theta}}}(\mathcal {A})=\bigcup\limits_{i\in N}\{z\in \mathbb{C}:|z-a_{i\cdots i}|\leq |w_{k_{j_\theta}}-a_{i\cdots i}|\}.\]

\begin{remark} \label{remark1} (i) To determine $v(a_{j\cdots j})$, we use Equality (\ref{eq
2.3}) with $z=a_{j\cdots j}$, i.e., \[v(a_{j\cdots
j})=-\mu+\rho(\mathcal {C}),\] where $\mu= \max\limits_{i\in
N}|a_{j\cdots j}-a_{i\cdots i}|$ and $\rho(\mathcal {C})$ is an
eigenvalue of the irreducible nonnegative tensor $\mathcal {C}$
defined as (\ref{eq 2.2}) (note that the irreducibility of $\mathcal
{C}$ is deduced by that of $\mathcal {A}$), and the following method
for calculating $\rho(\mathcal {C})$ (see \cite{Li1,Ng}), i.e., if
for an vector $x^{(0)}> 0$, let $\mathcal {D}=\mathcal {C}+h\mathcal
{I}$, where $h>0$, and let $y^{(0)} = \mathcal
{D}\left(x^{(0)}\right)^{m-1}$,
\[\begin{array}{ll}
x^{(1)}=\frac{\left(y^{(0)}\right)^{[\frac{1}{m-1}]}}{\parallel\left(y^{(0)}\right)^{[\frac{1}{m-1}]}\parallel},&
y^{(1)} = \mathcal {D}\left(x^{(1)}\right)^{m-1},\\
x^{(2)}=\frac{\left(y^{(1)}\right)^{[\frac{1}{m-1}]}}{\parallel\left(y^{(1)}\right)^{[\frac{1}{m-1}]}\parallel},&
y^{(2)} = \mathcal {D}\left(x^{(2)}\right)^{m-1},\\
\vdots &\vdots\\
x^{(k+1)}=\frac{\left(y^{(k)}\right)^{[\frac{1}{m-1}]}}{\parallel\left(y^{(k)}\right)^{[\frac{1}{m-1}]}\parallel},&
y^{(k+1)} = \mathcal {D}\left(x^{(k+1)}\right)^{m-1},~k\geq 2\\
\vdots &\vdots\\
\end{array}\]
and let
\[\underline{\lambda}_k=\min\limits_{x_i^{(k)}>0}
\frac{\left(y^{(k)}\right)_i}{\left(x_i^{(k)}\right)^{m-1}},~
\overline{\lambda}_k=\max\limits_{x_i^{(k)}>0}
\frac{\left(y^{(k)}\right)_i}{\left(x_i^{(k)}\right)^{m-1}},~k=1,2,\ldots,\]
then
\[\underline{\lambda}_1\leq \underline{\lambda}_2\leq \cdots \leq \rho(\mathcal {D})=\rho(\mathcal {C})+h
\leq  \cdots \leq \overline{\lambda}_2\leq \overline{\lambda}_1.\]
moreover, \begin{equation} \lim\limits_{k\longrightarrow
\infty}\underline{\lambda}_k= \rho(\mathcal {D})=\rho(\mathcal
{C})+h=\lim\limits_{k\longrightarrow \infty}\overline{\lambda}_k.
\end{equation}
Hence, we can obtain convergent upper and lower estimates of
$v(a_{j\cdots j})$, which do not need great accuracy for graphing
purpose, as Example \ref{example4.1} shows.

(ii) The numerical estimation of $\tilde{\gamma}_j(\theta)$. From
Lemma \ref{lemma 4.1}, there is $\tilde{\gamma}_j(\theta)$ such that
(\ref{eq4.4}) holds. Now, let $z=a_{j\cdots j}$ and
$\tilde{z}=a_{j\cdots
j}+\tilde{\gamma}_j(\theta)e^{\textbf{i}\theta}$, we have from
Proposition \ref{lemma 4.2} that
\[\tilde{\gamma}_j(\theta)\geq v(a_{j\cdots j})>0.\] Hence,
$v(a_{j\cdots j}+v(a_{j\cdots j})e^{\textbf{i}\theta})\geq 0$. If
$v(a_{j\cdots j}+v(a_{j\cdots j})e^{\textbf{i}\theta})=0$, then take
\[\tilde{\gamma}_j(\theta)=v(a_{j\cdots j})\] for which $v(a_{j\cdots j})$ can be determined
by the method of (i), otherwise, $v(a_{j\cdots j}+v(a_{j\cdots
j})e^{\textbf{i}\theta})>0$, then we increase the number
$v(a_{j\cdots j})$ to $v(a_{j\cdots j})+\Delta$, $\Delta>0$, until
$v(a_{j\cdots j}+(v(a_{j\cdots j})+\Delta)e^{\textbf{i}\theta})<0$,
and apply a bisection search to the interval $[v(a_{j\cdots j}),
~v(a_{j\cdots j})+\Delta]$ to determine $\tilde{\gamma}_j(\theta)$
satisfying (\ref{eq4.4}). Note here that estimates of
$\tilde{\gamma}_j(\theta)$ also do not need great accuracy for
graphing purpose.

(iii) It is obvious that $w_{k_{j_\theta}}$ in Step 3,
$k_{j_\theta}=1,2,\ldots, m$, are not only boundary points of
$\Gamma^R(\mathcal {A})$, but boundary points of
$\Gamma^{w_{k_{j_\theta}}}(\mathcal {A})$, and that
\[\Gamma^R(\mathcal {A})\subseteq \Gamma^{w_{k_{j_\theta}}}(\mathcal {A})\]
which shows that the larger $m$ is, the better
$\Gamma^{w_{k_{j_\theta}}}(\mathcal {A})$ approximates to
$\Gamma^R(\mathcal {A})$.
\end{remark}

\begin{example} \label{example4.1} Consider the irreducible tensor \[\mathcal {A} = [A(1,:,:),A(2,:,:),A(3,:,:)]\in
\mathbb{C}^{[3,3]},\] where
\[A(1,:,:)=\left(\begin{array}{cccc}
     2 &  0 &0  \\
      0& 0 &1    \\
      0&   0& 1  \\
\end{array}
\right),~ A(2,:,:)=\left(\begin{array}{cccc}
     0 &  0 &0  \\
      0& 2 &0    \\
      1&   0& 0  \\
\end{array}
\right), ~A(3,:,:)=\left(\begin{array}{cccc}
     1 &  1 &0  \\
      0& 1 &0    \\
      0&   0& 1  \\
\end{array}
\right).\] We next give a numerical approximation to
$\Gamma^R(\mathcal {A})$. By the part (i) of Remark \ref{remark1},
we compute $v(a_{iii})$ for $i=1,2,3$, and get
\[v(a_{111})=v(a_{222})=1.62019803,~v(a_{333})=1.43720383.\] Furthermore,
based on the entries $a_{111}=a_{222}=2$ and $a_{333}=1$, we look
for six points
\[\begin{array}{ll}
w_1=a_{111}+\tilde{\gamma}_1(0),& w_2=a_{333}-\tilde{\gamma}_3(\pi),\\
w_3=a_{111}+\textbf{i}~\tilde{\gamma}_1(\frac{\pi}{2}),&w_4=a_{111}-\textbf{i}~\tilde{\gamma}_1(\frac{3\pi}{2}),\\
w_5=a_{333}+\textbf{i}~\tilde{\gamma}_3(\frac{\pi}{2}),&
w_6=a_{333}-\textbf{i}~\tilde{\gamma}_3(\frac{3\pi}{2})
\end{array}\]
of $\partial\Gamma^R(\mathcal {A})$, which are found by the method
proposed in the part (ii) of Remark \ref{remark1}, that is,
\[\begin{array}{lll}
w_{1}=3.62019802,& w_{2}=-0.43720383,&w_3=2+\textbf{i}1.86790935,\\
w_4=2-\textbf{i}1.86790935,&
w_5=1+\textbf{i}1.81661895,&w_6=1-\textbf{i}1.81661895.
\end{array}\]
And now $\Gamma(\mathcal {A})$, $\Gamma^{w_{1}}(\mathcal {A})$,
$\Gamma^{w_{2}}(\mathcal {A})$, $\Gamma^{w_{3}}(\mathcal {A})$
$(\Gamma^{w_{3}}(\mathcal {A})=\Gamma^{w_{4}}(\mathcal {A}))$ and
$\Gamma^{w_{5}}(\mathcal {A})$ $(\Gamma^{w_{5}}(\mathcal
{A})=\Gamma^{w_{6}}(\mathcal {A}))$ are given by Figures.1, 2, 3, 4
and 5, respectively. In Figure 6, the set
$\left(\bigcap\limits_{k=1}^6 \Gamma^{w_{k}}(\mathcal {A})\right)$,
which approximates to $\Gamma^R(\mathcal {A})$, is shown with the
inner boundary, and the boundary of $\Gamma(\mathcal {A})$ is shown
with the outside. The six points $\{ w_k\}_{k=1}^6$ are plotted with
asterisks. As we can see,
$\left(\bigcap\limits_{k=1}^6\Gamma^{w_{k}}(\mathcal{A})\right)
\subset \Gamma(\mathcal {A})$, that is, the set which approximates
to Minimal Ger\v{s}gorin tensor eigenvalue inclusion set is also
contained in Ger\v{s}gorin tensor eigenvalue inclusion set. Also, it
is easy to see that more points of $\partial\Gamma^R(\mathcal {A})$
are given, the better $\Gamma^{w_{k}}(\mathcal {A})$ approximates to
$\Gamma^R(\mathcal {A})$.

\end{example}

\section{Conclusions} In this paper, we present Minimal Ger\v{s}gorin tensor eigenvalue inclusion
set $\Gamma^{R}(\mathcal {A})$, give a sufficient and necessary
condition for $\Gamma^{R}(\mathcal {A})$ by using the
$Perron-Frobenius$ theory of nonnegative tensors, and establish the
relationships between $\partial \Gamma^{R}(\mathcal {A})$, $\sigma
(\Omega(\mathcal{A}))$, $\sigma (\hat{\Omega}(\mathcal{A}))$ and
$\Gamma^{R}(\mathcal {A})$, i.e., \[\partial \Gamma^{R}(\mathcal
{A})\subseteq \sigma (\Omega(\mathcal{A}))\subseteq \sigma
(\hat{\Omega}(\mathcal{A}))= \Gamma^{R}(\mathcal {A}).\] These
results obtained are generalizations of the corresponding results of
matrices \cite {Varga} to higher order tensors. In \cite{Li-Li}, Li
et al. provided two new eigenvalue inclusion sets which are
contained in Ger\v{s}gorin eigenvalue inclusion set for tensors. An
interesting problem arises: what's the relationship between Minimal
Ger\v{s}gorin tensor eigenvalue inclusion set and the sets in
\cite{Li-Li}? In the future, we will research this problem.

\bigskip
\noindent {\bf Acknowledgments.} This work is supported by National
Natural Science Foundations of China (10961027, 71161020, 71162005)
and IRTSTYN.


\end{document}